\newcommand\la{\langle}
\newcommand\ra{\rangle}
\newcommand\hh{{\mathfrak h}}
\newcommand\nn{{\mathfrak n}}
\newcommand\vv{{\mathfrak v}}
\newcommand\zz{{\mathfrak z}}
\newcommand\CC{\mathbb C}
\newcommand\NN{\mathbb N}
\newcommand\RR{\mathbb R}
\newcommand\ZZ{\mathbb Z}
\newcommand\Auto{\operatorname{Auto}}
\newcommand\Iso{\operatorname{Iso}}
\DeclareMathOperator{\En}{En}
\DeclareMathOperator{\sgn}{sgn}
\theoremstyle{plain}
\newtheorem{thm}{Theorem}[section]
\newtheorem{lem}[thm]{Lemma}
\newtheorem{prop}[thm]{Proposition}
\newtheorem{cor}[thm]{Corollary}
\theoremstyle{definition}
\newtheorem{defn}[thm]{Definition}
\newtheorem{rem}[thm]{Remark}
\newtheorem{example}[thm]{Example}
\newcounter{casenum}
\begin{document}
	
	\title[Magnetic geodesics on Heisenberg nilmanifolds]
	{Closed Magnetic geodesics on Heisenberg nilmanifolds}

	\author{Gabriela P. Ovando, Mauro Subils}
	
	\thanks{{\it (2000) Mathematics Subject Classification}: 53C99, 70G65, 70F17, 22E25}
	
	\thanks{{\it Key words and phrases}: Closed magnetic trajectories,   Heisenberg Lie group, Heisenberg nilmanifolds, Ma\~n\'e critical value.
	}
	
	\thanks{Partially supported by  ANPCyT, SCyT (UNR)}
	
	\address{ Departamento de Matem\'atica, ECEN - FCEIA, Universidad Nacional de Rosario.   Pellegrini 250, 2000 Rosario, Santa Fe, Argentina.}
	
	\
	
	\email{gabriela@fceia.unr.edu.ar}
	
	\email{subils@fceia.unr.edu.ar}
	

	\begin{abstract}

	In this work we study the existence of closed magnetic geodesics on three-dimensional Heisenberg nilmanifolds for every left-invariant Lorentz force. Our first objective is to establish the existence of closed contractible magnetic geodesics on $H_3$. Once the invariant magnetic field is induced to a  compact quotient $M=\Lambda \backslash H_3$, we study magnetic geodesics on $M$. Firstly, we determine conditions on a lattice $\Lambda  \subset  H_3$ to ensure that a given magnetic geodesic projects to a closed curve on $M$.
	In particular, we prove that for {\em any} energy level below the Mañé critical value there always exists a contractible closed magnetic geodesic on the compact manifold $M$. On the other hand, we show that closed magnetic geodesics do not necessarily exist in every homotopy class.
	Finally, we present examples of compact quotients $\Gamma_k\backslash H_3$ that admit infinitely many closed magnetic trajectories, as well as examples for which no closed non-contractible magnetic trajectories exist for a given left-invariant Lorentz force.
\end{abstract}

\maketitle

\setcounter{tocdepth}{1}
\tableofcontents

\noindent\section{Introduction}

In the present paper we work on the Heisenberg Lie group of dimension three $H_3$ (and compact quotients $\Lambda \backslash H_3$)  with solutions of the following Equation
\begin{equation}\label{mageq}
	\nabla_{\gamma'}{\gamma'}= q F\gamma'
\end{equation}
where $\nabla$ is the corresponding Levi-Civita connection and $F$ is a skew-symmetric $(1,1)$-tensor such that the corresponding 2-form $\omega_F:=g(F\cdot ,\cdot)$ is closed. This 2-form is called a {\em magnetic field}, while $F$ is referred as a {\em Lorentz force}. Solutions of Equation \eqref{mageq}, are called {\em magnetic trajectories or magnetic geodesics}.

This mathematical formalism plays an important role in several situations such as the study of the motion of a charged particle in the presence
of a time-independent magnetic field in three-dimensional space, in the classical magnetostatic theory; see for instance \cite{Thi}. A model for this system  consists of a Riemannian manifold where the trajectory of a particle subject to a force is described by an equation of the form
above, according to Maxwell's Equations.   

One can find different approaches in the literature to the question of existence or non-existence of closed magnetic geodesics: the theory of dynamical systems using methods from symplectic geometry in \cite{Ar,Gi,Sl}, the Morse-Novikov theory \cite{NT, Tai} and Aubry-Mather's theory \cite{CMP}. Another approach was used in \cite{Sc} by studying  the zeros of a certain vector field. Various results were stated depending on the energy level or the nature of the magnetic field: if it is exact or not (called a {\em monopole}). For exact magnetic flows in any dimension, Hofer and Viterbo proved the existence of periodic orbits for high energy levels \cite{HV}. For low energy levels, the Mañ\'e critical value plays an important role related to the existence question and reveals different behaviour of the flow \cite{Ma,CMP}.

To the best of the authors’ knowledge, the existence of contractible closed magnetic geodesics at every energy level below the Mañ\'e critical value is not guaranteed. Thus, in light of the results above, the search for periodic orbits was divided into three realms of high-, low-, and intermediate-energy levels. In the intermediate case, there is generally no information about the existence of such orbits. On the other hand, these situations must be contrasted  with the magnetic field, if it is exact or not.  For magnetic monopoles one cannot expect to find periodic orbits in all energy levels.

Our main contribution to the question of existence of closed magnetic geodesics on compact spaces is the following result (for a discrete cocompact lattice $\Gamma \subset H_3$), which applies to any invariant magnetic field and whose proof makes use of tools from Lie theory:

\smallskip

{\it {\bf Theorem 1.} Let $\Gamma\backslash H_3$ denote a closed connected Riemannian Heisenberg nilmanifold, and let $\omega$  be a closed 2-form whose pullback to the universal cover $H_3$ gives rise to a left-invariant Lorentz force $F$. Let $c>0$ denote the corresponding  Mañé critical value.
	
	\begin{enumerate}[(i)]
		\item There exists a contractible closed magnetic geodesic with energy $k$ if and only if $0<k\leq c$.
		
		\item There exists a closed magnetic geodesic with energy $k$ and nontrivial homotopy class $0 \neq \lambda\in\Gamma$ if and only if either $k> c$ or, for $k\leq c$, $\lambda$ is $F$-admissible (see Definition \ref{defFadm}).
		
\end{enumerate}	}


This result should be compared with Theorem 1.1 in the works \cite{Me,Os} obtained by Merry and Osuna respectively,  which applies in a more general setting, although in the present particular case we obtain a stronger conclusion.

We consider the standard left-invariant metric on the Heisenberg Lie group $H_3$, which descends to compact quotients $\Gamma \backslash H_3$.  This corresponds to the so-called nilgeometry, one of Thurston’s eight geometries.  For the magnetic equation \eqref{mageq}, we restrict attention to left-invariant magnetic fields (equivalently, Lorentz forces). We note that, locally, there is a correspondence between solutions of the magnetic equation on $H_3$ and on compact nilmanifolds $\Gamma\backslash H_3$. 

In Section \ref{symmetries}, we recall the main results on
 magnetic geodesics on $H_3$ done in \cite{OS3}, since they are necessary for the study of the closeness condition.  There, it was introduced an equivalence relation on the set of Lorentz forces under which, any  invariant Lorentz force $F$ is equivalent to exactly one of the following matrices (in the basis of left-invariant vectors of $\hh_3$):	 
$$F=0,\qquad \quad F_{e_1,\rho}=\left( \begin{matrix}
	0 & -\rho  & -1\\
	\rho & 0 &0 \\
	1 & 0 & 0
\end{matrix}
\right), \quad \mbox{ for } \rho\geq 0\quad \mbox{ or }\quad F_{0,1}=\left( \begin{matrix}
	0 & -1& 0\\
	1 & 0 &0 \\
	0 & 0 & 0
\end{matrix}
\right). 
$$ In canonical coordinates of $\RR^3$, the non-trivial matrices correspond to the following closed 2-forms
$$F_{e_1,\rho} \longleftrightarrow \left(\rho - \frac12 x\right) dx \wedge dy +dx \wedge dz, \qquad \quad F_{0,1}\longleftrightarrow  dx \wedge dy.$$
For the Lorentz force $F_{0,1}$ corresponding to left-invariant exact forms, the magnetic trajectories were obtained by Epstein, Gornet and Mast \cite{EGM} and Munteanu and Nistor in \cite{MN}, while the situation $F_{e_1,0}$ was solved in \cite{OS}. This last case corresponds to invariant harmonic 2-forms. As proved in \cite{OS3} these magnetic geodesics are solutions of a variational problem, that is, they are extremals of a functional given by
$$A(u) = \int L(t,x,y,z,\dot{x}, \dot{y}, \dot{z}) dt,$$
with $u:[0,T]\to \RR^3$, $u(t)=(x(t), y(t), z(t))$. In fact, in \cite{OS3}  a Lagrangian  was given as
$$L= T  - \theta_{F_{e_1,\rho}}, \qquad { \begin{array}{rcl} 
		\mbox{ where }	T & = & \frac12
		\left(
		\dot{x}^2 + \dot{y}^2 + (\dot{z} +\frac12 (\dot{x}y - x\dot{y}))^2\right),\\ 
		\mbox{ for }	\theta_{F_{e_1,\rho}} & = &  (\frac{\rho}2 y - \frac12 xy)\dot{x} -\frac{\rho}2 x \dot{y} - x \dot{z}, \quad  \theta_{F_{0,1}} = \dot{z} + \frac12 (y\dot{x} - x\dot{y}).
\end{array}}$$

At the end of this section, we give a brief description of the magnetic geodesics corresponding to $F_{e_1,\rho}$, as computed in \cite{OS3}. These curves written as exponential of a curve on the Lie algebra with coordinates  $x(t), y(t), z(t)$ are completely determined by the component $x(t)$ and are classified according to the sign of the discriminant $\Delta$ of a quartic polynomial depending on the initial conditions $x_0,y_0,z_0$. Moreover, the  components can be expressed as rational functions in elliptic functions, whose parameters depend on the roots of this polynomial.

In Section \ref{section5}, we study the existence of closed magnetic trajectories on $H_3$. While for the Lorentz force $F_{0,1}$ periodic trajectories exist only at low energy levels, we prove in Theorem \ref{thenergyperiodic} that, for $F_{e_1,\rho}$, there exist infinitely many periodic magnetic trajectories of any given energy $E>0$ passing through the identity with $x_0\geq 0$. First, the problem is reduced to find trajectories for which $x(t)$ is periodic with period $\omega$ and $y(\omega)=0$. When $\Delta\geq 0$, one shows that $y(\omega)\neq 0$, hence no periodic trajectories arise in this case. When $\Delta<0$, we prove the existence of periodic trajectories for all energy levels. More precisely, we parametrize such trajectories by $(c,d,e)\in(0,\infty)\times(0,1)\times[-1,1]$, express $y(\omega)$ in these variables, and show that for each $(c,e)$ there exists a unique value $d_c$ such that $y(\omega)=0$. The corresponding energy depends bijectively on $c$ and ranges over all positive values. Finally, we introduce an equivalence relation (based on isometries and translation of the parameter) on periodic trajectories and prove  that trajectories with the same energy are equivalent, see  Theorem \ref{Equienergy}.

In Section \ref{section6}, we study closed magnetic trajectories on $\Gamma\backslash H_3$, which correspond to projections of $\lambda$-periodic trajectories (Definition \ref{periodic}) on \(H_3\), where $\lambda\in\Lambda$. We begin by describing, in Lemma \ref{lambdaper}, criteria on the element $\lambda\in \Lambda$ that ensure the existence of $\lambda$-periodic magnetic trajectories for any 2-step nilpotent Lie group satisfying $\dim \zz - \dim \nn'\leq 1$, where $\zz$ denotes the center and $\nn'$ the commutator. Then, Proposition \ref{proplambdaperiod1} characterizes when a magnetic trajectory on $H_3$ is $\lambda$-periodic for some arbitrary $\lambda$ and, on the other hand, Proposition \ref{proplambdaperiod2} establishes the conditions on $\lambda$ for the existence of $\lambda$-periodic magnetic trajectory. In particular, we show that if a lattice $\Lambda$ admits suitable elements $\lambda$, then the nilmanifold $M=\Lambda\backslash H_3$ admits periodic magnetic trajectories for every energy level $E>0$.

In the final section, we compute the Mañé critical value for left-invariant magnetic fields on $\Lambda \backslash H_3$, which are weakly exact, see \cite{Me}. We show that when the magnetic field does not admit an invariant primitive on $H_3$, the critical value is infinite. In the exact case ($F=F_{0,1}$), this value was computed in \cite{EGM}; here we provide an alternative proof. By summarising  results of Sections 3, 4 and 5, we obtain the proof of Theorem 1.


These results on closed magnetic geodesics can  be compared with those of closed  geodesics, which in the context of 2-step nilmanifolds  have been studied by several authors such as DeCoste, DeMeyer, Eberlein, Lee, Mast and Park,  see for instance \cite{dC,dM,Eb,LP,Ma}.

As an application, we exhibit examples of manifolds $\Gamma\backslash H_3$ that admit infinitely many closed non-contractible magnetic trajectories, as well as examples that admit none.

In the Appendix we provide properties of elliptic integrals which are used to determine the existence of closed curves. 





\section{The magnetic equations on the Heisenberg Lie group}	\label{symmetries}
In this section we show features of the magnetic equations. We recall tools given  in \cite{OS3}, where the left-action of a group, that includes the isometries, enables to find symmetries of the equations and therefore to simplify them for solving.

\smallskip

Let $(M,g)$  denote a Riemannian manifold with Levi-Civita connection $\nabla$. Let $\Omega$ be a differentiable 2-form on $M$, then there exists a unique   skew-symmetric $(1,1)$-tensor $F:TM \to TM$  defined by the relation:
$$\Omega_F(U,V)=g(FU,V), \quad \mbox{ for all }\quad  U,V\in \chi(M).$$
Conversely, the relation above defines a 2-form whenever $F$ is given. If the 2-form is closed, then one gets an extra condition on $F$. In such case the tensor $F$ is called a  {\em  Lorentz force}. 


Given a Lorentz force $F$ and a magnetic trajectory $\gamma$, one has
$$\frac{d}{dt}g(\gamma'(t), \gamma'(t))=2g(\nabla_{\gamma'(t)}\gamma'(t), \gamma'(t))=2g(F\gamma'(t), \gamma'(t))=0, $$
implying that magnetic  curves have  constant velocity.  

The {\em energy} of the magnetic trajectory $\gamma$ is defined by the scalar 
\begin{equation}\label{energy}
	\En(\gamma)=\frac{1}{2}g(\gamma'(0), \gamma'(0)).
\end{equation}

Note that a reparametrization of a magnetic curve could not be a solution of Equation \eqref{mageq}. In fact, take the curve $\tau(t)=\gamma(r t)$ with $r\neq 0,\,1$. Then it holds $\tau'(t)=r \gamma'(r t)$,  so that one has 
$$\nabla_{\tau'(t)}\tau'(t)= r^2 F\gamma'(r t),\mbox{ while on the other side } F \tau'(t)=r F \gamma'(r t).$$	

Denote by  $\mathcal{F}$ the set of Lorentz forces on $M$ and by $\mathcal{C}$ the set of all differentiable curves on $M$. 
Let  $\Iso(M)$ be the isometry Lie group of $M$ and let  $\mathbb{R}^*=\RR-\{0\}$ be the multiplicative group. 
There is a left-action of the product group  $\Iso(M) \times \RR^*$ on $\mathcal{F}$ given by:
\begin{equation}\label{action1}
	(\psi, r)\cdot F =r \psi_*\circ  F \circ  \psi^{-1}_*, \quad \mbox{for } \psi \in \Iso(M), r\in \RR^*, F\in \mathcal F, 
\end{equation}
where $\psi_*$ denotes the differential of $\psi$. 
The same group acts on the set $\mathcal{C}$ in the following way:
\begin{equation*}
	((\psi, r)\cdot \gamma)(t)=(\psi\circ\gamma)(rt), \quad \mbox{for } \psi \in \Iso(M),  r\in \RR^*, \gamma \in \mathcal C. 
\end{equation*} 	

Note that  the associated $2$-form of the $(1,1)$-tensor $(\psi, r)\cdot F$ is a multiple of the pullback by $\psi^{-1}$ of $\Omega_F$. This shows that $(\psi, r)\cdot F$ is a Lorentz force:  $(\psi, r)\cdot F\in\mathcal{F}$. It is easy to check that these left-actions are well defined. 

Fix a left-action of a group $H$ on a set $S$. Thus, the {\em isotropy subgroup} for $s\in S$ is the subgroup of $H$ given by $$H_s=\{h\in H \,:\, h\cdot s=s\}, $$
while the {\em orbit} of $s\in S$ is the subset  defined by $H\cdot s=\{h\cdot s \, : \, h\in H\}$. 

\begin{lem} \cite{OS3} \label{lem2} Let $(M,g)$ denote a Riemannian manifold. Let $F$ denote a Lorentz force  on $M$ and consider
	the group $H=\Iso(M)\times\mathbb{R}^{*}$. 
	\begin{enumerate}[(i)]
		\item If $\gamma$ is a magnetic trajectory for the Lorentz force $F$ then for any $(\psi,r)\in\Iso(M)\times\mathbb{R}^{*}$, the curve $(\psi,r)\cdot\gamma$ is a magnetic trajectory for the Lorentz force $(\psi,r)\cdot F$. 
		\item  Let  $H_F$ be the isotropy subgroup for the Lorentz force $F$. Then $H_F$ is a group of symmetries for  the magnetic equation $\eqref{mageq}$. 
		
		Furthermore,  $H_F$ consists of elements $(\psi,r)$, with $r=\pm 1$ and  such that
		\begin{equation}\label{symmetry}\psi_* \circ F \circ \psi_*^{-1}=F\quad \mbox{ or } \quad \psi_* \circ F \circ \psi_*^{-1}= -F.
		\end{equation}
	\end{enumerate}
\end{lem}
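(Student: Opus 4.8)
The plan is to prove (i) by a direct computation combining the chain rule with the isometry invariance of the Levi-Civita connection, and then to obtain (ii) from (i) together with a pointwise norm argument that forces $r=\pm1$.

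First I would treat (i). Writing $\sigma=(\psi,r)\cdot\gamma$, so that $\sigma(t)=\psi(\gamma(rt))$, the chain rule gives $\sigma'(t)=r\,\psi_*\gamma'(rt)$. Since $\psi$ is an isometry it preserves the Levi-Civita connection, so for the covariant derivative along curves one has $\psi_*\bigl(\nabla_{\gamma'}\gamma'\bigr)=\nabla_{(\psi\circ\gamma)'}(\psi\circ\gamma)'$; feeding in the quadratic scaling of the covariant acceleration under the reparametrization $t\mapsto rt$ already noted above yields
$$\nabla_{\sigma'(t)}\sigma'(t)=r^{2}\,\psi_*\bigl(\nabla_{\gamma'}\gamma'\bigr)\big|_{rt}.$$
I would then substitute the magnetic equation $\nabla_{\gamma'}\gamma'=qF\gamma'$ on the left and unwind the definition of the action on the right: since $\psi_*^{-1}\sigma'(t)=r\,\gamma'(rt)$, one gets $q\bigl((\psi,r)\cdot F\bigr)\sigma'(t)=qr\,\psi_*F\bigl(r\gamma'(rt)\bigr)=qr^{2}\,\psi_*\bigl(F\gamma'(rt)\bigr)$, matching the previous display. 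Hence $\sigma$ solves \eqref{mageq} for $(\psi,r)\cdot F$. This step is purely mechanical.

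The symmetry claim in (ii) is then immediate from (i): if $(\psi,r)\in H_F$ then $(\psi,r)\cdot F=F$, so $(\psi,r)\cdot\gamma$ is again a magnetic trajectory for the \emph{same} force $F$, and therefore $H_F$ preserves the solution set of \eqref{mageq}.

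For the characterization I would read the isotropy condition $r\,\psi_*\circ F\circ\psi_*^{-1}=F$ pointwise as $r\,\psi_{*,p}\,F_p\,\psi_{*,p}^{-1}=F_{\psi(p)}$, and then use that $\psi_{*,p}$ is a linear isometry, so conjugation by it preserves every orthogonally invariant norm of the skew-symmetric endomorphism $F_p$; this gives $\|F_{\psi(p)}\|=|r|\,\|F_p\|$ for all $p$. The hard part is to deduce $|r|=1$, and this is really the only non-formal step: in the case of interest $F$ is left-invariant and nonzero, whence $p\mapsto\|F_p\|$ is a positive constant and the equality forces $r^{2}=1$ at once; more generally, when $\|F_p\|$ is bounded one tests the relation along a sequence approaching the supremum, and applies it to both $(\psi,r)$ and its inverse $(\psi^{-1},r^{-1})\in H_F$, to squeeze $|r|\le1$ and $|r|\ge1$. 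Once $r=\pm1$ is known, substituting back into $r\,\psi_*F\psi_*^{-1}=F$ gives $\psi_*F\psi_*^{-1}=F$ for $r=1$ and $\psi_*F\psi_*^{-1}=-F$ for $r=-1$, which is exactly \eqref{symmetry}.
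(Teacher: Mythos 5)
Your proposal is correct and follows essentially the same route as the paper: part (i) by the chain-rule computation already sketched in the text preceding the lemma (the $r^{2}$ scaling of the covariant acceleration versus the $r$ scaling of $F\gamma'$), and the constraint $r=\pm1$ in part (ii) by comparing operator norms of the skew-symmetric endomorphism under conjugation by the linear isometry $\psi_*$. If anything you are more careful than the paper, whose displayed chain of equalities takes the supremum over unit vectors at a single point and glosses over the change of basepoint from $p$ to $\psi(p)$, which you handle explicitly via constancy of $\Vert F_p\Vert$ in the left-invariant case and the squeeze with $(\psi^{-1},r^{-1})$ in general.
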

The proof of (i) in the  Lemma follows from usual computations, while the second statement is  consequence from the definitions.

To complete the proof, let $F$ be a Lorentz force of $M$ and take $(\psi,r)\in H_F$. So, the linear map $F$ is  skew-symmetric at every $T_pM$. As usual denote by  $\Vert v\Vert$  the norm of a vector $v\in T_pM$:  $\Vert v\Vert=g_p(v,v)^{1/2}$.  It follows that  if  $(\psi, r)$  is an element of  the isotropy subgroup for $F\neq 0$,  one has
\begin{equation*}
	0< \sup_{\Vert v\Vert=1}\Vert F(v)\Vert = \sup_{\Vert v\Vert=1}|r| \, \Vert \psi_* F \psi^{-1}_*(v)\Vert= |r|\sup_{\Vert v\Vert=1} \Vert F \psi^{-1}_*(v)\Vert = |r| \sup_{\Vert v\Vert=1}\Vert F (v)\Vert,  
\end{equation*}
which implies  $r=1$ or $r=-1$. This says that $(\psi,r)\in H_F$ for a Lorentz force $F$ if and only if Equations  \eqref{symmetry} hold.

\begin{rem}\label{presenergy}
	Since $|r|=1$, the action of $H_F$ on the curves preserves the energy level.
\end{rem}







Assume now that the Riemannian manifold is a nilpotent Lie group equipped with a metric $(N,\la\,,\,\ra)$ for which translations on the left by elements of the group are isometries, called a left-invariant metric. Note that a left-invariant metric on a Lie group is determined by its values at the Lie algebra, usually identified with the tangent space at the identity element $T_eN$. 

\begin{example}\label{heis} The lowest-dimensional (non-abelian) nilpotent Lie group is the so-called {\em Heisenberg Lie group} of dimension three, which can be modelled on $\RR^3$.
In usual coordinates $(x,y,z)$, the group law on $H_3$ reads
$$(x_1,y_1, z_1)(x_2,y_2, z_2)=(x_1+x_2, y_1+y_2, z_1+z_2+\frac12 (x_1 y_2- x_2 y_1)), $$
and a basis of left-invariant vector fields is given by the globally defined vector fields on $\RR^3$, evaluated at $p=(x,y,z)$:
$$e_1(p)=\partial_x - \frac{1}2 y \partial_z, \quad  e_2(p)=\partial_y + \frac{1}2 x \partial_z, \quad  e_3(p)=\partial_z,$$
where we make use of $\partial_u$ to denote the partial derivative $\frac{\partial}{\partial u}$.	
The canonical metric is the only left-invariant metric on $H_3$ for which the basis $\{e_1,e_2,e_3\}$ is orthonormal.
	\end{example}

In \cite{Wo} Wolf proved that the group of isometries of such Riemannian manifolds, is given by $\Iso(N,\la\,,\,\ra)=N.\Auto(N)$, where $N$ is identified with  the subgroup of translations on the left by elements of $N$ and $\Auto(N)$ denotes the group of orthogonal automorphisms. In analogy to  the Euclidean space, 
any isometry can be written as
$$L_p\circ \psi, \quad \mbox{ for } \psi\in \Auto(N), p\in N,$$
where $L_p$ denotes a translation on the left by $p\in N$. 

 Moreover $N$ is a normal subgroup of $\Iso(N)$; it holds
$$L_p \circ \psi=\psi \circ L_{\psi(p)}\quad \mbox{ for any } \psi\in \Auto(N), p\in N.$$	 
Consider a left-invariant Lorentz force, that means that it is invariant by translations on the left:
$${L_p}_* \circ F= F\circ {L_p}_* , \quad \mbox{ for any } p\in N. $$
In terms of the action of Lemma \ref{lem2} one gets,   ${L_p}_* \cdot F=F$, saying that the subgroup of left-translations $N$ is contained in the isotropy subgroup $H_F$.

Let $\mathcal F_l$ denote the set of left-invariant Lorentz forces. Thus  the action of the group $\Iso(N)\times \RR^*$ on the set $\mathcal F_l$ is determined by the action of orthogonal automorphisms:
$$(L_p\circ \psi) \cdot F=\psi\cdot  F.$$


In this work, we focus on the Heisenberg Lie group $H_3$ endowed with its canonical metric as in Example \ref{heis}. So, $H_3$ is the simply connected Lie group whose Lie algebra is the Heisenberg Lie algebra $\hh_3$. This is a $2$-step nilpotent Lie algebra spanned by vectors $e_1, e_2, e_3$ satisfying the non-trivial Lie bracket relation $$[e_1,e_2]=e_3.$$ 
 The center of $\hh_3$, which coincides with the commutator,  is spanned by the vector $e_3$ and its orthogonal complement $\vv$ is spanned by $e_1, e_2$. Thus one has the orthogonal decomposition as vector spaces:
	$$\hh_3=\vv \oplus \zz, \quad \mbox{ with }\quad \zz=\RR e_3,\quad  \vv=\zz^\perp.$$

	
The isometry group of the Heisenberg Lie group $(H_3, \la\,,\,\ra)$ is  $\Iso(H_3)= H_3\rtimes\mathrm O(2)$, where $\mathrm O(2)$ is the group of orthogonal automorphisms. The action on $\hh_3$ is explicitly given by
	$$B\cdot (V,Z) = (B(V),\,det(B)Z), \ \  \mbox{ for }B\in \mathrm O(2), V\in \vv, Z\in \zz.$$
	Note that we identify $\RR^2$ with the subspace $\vv=span\{e_1,e_2\}$. And since $H_3$ is simply connected, one makes no distinction between automorphisms of $N$ and $\nn$.

A left-invariant Lorentz force on $H_3$ corresponds to  a skew-symmetric map on $\hh_3$. It is easy to show that such map  $F:\hh_3 \to \hh_3$ has a matricial presentation in the basis $e_1, e_2, e_3$ as 	
\begin{equation} \label{matrixF}
	F=\left( \begin{matrix}
	0 & -\rho  & - \beta\\
	\rho & 0 & -\alpha \\
	\beta & \alpha & 0
\end{matrix}
\right), \quad \alpha, \beta \in \RR. 
\end{equation}
We denote $F$ by $F_{U,\rho}$ with  $U=\beta e_1+ \alpha e_2\in\hh_3$. Also, $F$ corresponds to a unique left-invariant 2-form on $H_3$, that can be describe at $\hh_3$ by $\omega_F=\langle F\cdot, \cdot\rangle$. 

\smallskip

We recall now some definitions about differential forms on a Riemannian manifold. Let $\Omega(M)$ be the space of all differential forms on $M$, $d:\Omega(M)\to \Omega(M)$ denote the exterior derivative, $\star:\Omega(M)\to \Omega(M)$ denote the Hodge star operator and $d^*:\Omega(M)\to \Omega(M)$ the codifferential, see for instance \cite{Wa}. A 2-form $\omega$ is called
\begin{itemize}
	\item  {\em closed} if $d\omega=0$,
	\item  {\em exact} whenever it exists a 1-form such that $d\theta=\omega$,
	\item {\em co-closed} if $d^* \omega=0$, and
	\item {\em harmonic} if it is closed and co-closed. 
\end{itemize}  

If $N$ is a Lie group with Lie algebra $\nn$ endowed with a left-invariant metric and we restrict to left-invariant differential forms, then a Hodge decomposition holds within this class. Let $\nn^*$ denote the dual of $\nn$. Indeed, the operators $d$ and $\star$ act at the Lie algebra level and $d$ has an adjoint operator with respect to the induced metric, namely the codifferential $d^*$. Thus, in the case of $2$-forms, we have: 
$$\Lambda^2\nn^*=d(\Lambda^1\nn^*)\oplus H^2(\nn)\oplus d^*(\Lambda^3\nn^*)$$ 
where $d^*=-\star d \star $ is the codifferential on $3$-forms, $\Lambda^p\nn^*$ corresponds to the space of left-invariant $p$-forms on $N$ and $H^2(\nn)$ the subspace of harmonic $2$-forms.

If $M=H_3$ with the metric presented above, we have that 
$$d(\Lambda^1\hh_3^*)=span\{e^1\wedge e^2\}, \ \ H^2(\hh_3)=span\{e^1\wedge e^3, e^1\wedge e^3\}, \ \ d^*(\Lambda^3\hh_3^*)=\{0\}.$$
So the Lorentz forces in Equation \eqref{matrixF} corresponding to harmonic invariant forms are the ones with $\rho=0$.

On the other hand, since $H_3$ is diffeomorphic to $\RR^3$ the proof of the next assertion is straightforward.

\smallskip

\begin{cor} Any left-invariant 2-form on $H_3$ is exact.
	\end{cor}
 
Indeed, the corresponding 1-form does not need to be invariant. Nonetheless, the Lorentz forces in Equation \eqref{matrixF} with $\alpha=\beta=0$ have a left-invariant primitive, i.e. the left-invariant exact forms.   
	

	

\

Now consider the action of $\Iso(H_3)\times \RR^*$ on the set of left-invariant Lorentz forces $\mathcal F_l$ described in Lemma \ref{lem2}, which can be equivalently viewed as an action on 2-forms. From the definitions, this action on a Lorentz force $F$ is determined by $B\in \mathrm O(2)$ and  $r\in \RR^*$, and is given by
$$(B,r)\cdot F_{U, \rho} = r\det(B)F_{BU,\rho}.$$

The next result describes the orbits on $\mathcal F_l$. 
 Recall that $\mathrm{O}(2)$ has two connected components: $\mathrm{SO}(2)$ which consists of matrices with determinant equals one and $\mathrm{SO}^-(2)$ consisting of matrices with determinant minus one.

\begin{prop} \cite{OS3}\label{orbits} The orbits under the action of $G=\Iso(N)\times \RR^*$ on the set of non-trivial left-invariant Lorentz forces $\mathcal F_l$ are mostly parametrized by $\rho\geq 0$; that is, every point in the following set gives different orbits:
	$$\{F_{e_1, \rho}\}_{\{ \rho \geq 0\}}\cup \{F_{0,1}\}.$$
	While isotropy subgroups are
	\begin{enumerate}[(i)]
		\item For $F_{e_1, \rho}$ with $\rho> 0$: $H_3\rtimes \left( \{(Id,1)\} \cup \{(S,-1)\}\right)$;
		\item For $F_{e_1,0}$: $H_3\rtimes \left( \{(Id,1)\} \cup \{(-S,1)\}\cup \{(-Id,-1)\} \cup \{(S,-1)\}\right)$;
		\item For $F_{0,1}$ one has $H_3\rtimes \left( \mathrm{SO}(2)\times \{1\} \cup \mathrm{SO}^-(2)\times \{-1\}\right),$
	\end{enumerate}
	where $S:\vv\to \vv$ is the linear map determined by the values $S(e_1)=-e_1, S(e_2)=e_2$.
\end{prop}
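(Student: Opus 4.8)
The orbit description is already implicit in the normalization carried out above, so the plan is to first record that this normalization yields a complete and irredundant list of representatives, and then to devote the main effort to the isotropy groups. The starting point is the transformation law $(B,r)\cdot F_{U,\rho}=r\det(B)F_{BU,\rho}$ together with the observation that every left translation fixes a left-invariant Lorentz force; hence the action of $\Iso(H_3)\times\RR^*$ factors through $\Or(2)\times\RR^*$, and it suffices to analyze the latter, with $H_3$ appearing as a semidirect factor of each isotropy group. For completeness I would split into $U\neq 0$ and $U=0$: when $U\neq 0$ a rotation $B\in\SO(2)$ aligns $U$ with $\Vert U\Vert e_1$ and the scalar $r=\Vert U\Vert^{-1}$ normalizes its length, reducing $F_{U,\rho}$ to some $F_{e_1,\widetilde\rho}$, while $(-\id,-1)$ sends $\widetilde\rho$ to $-\widetilde\rho$ and so allows $\widetilde\rho\geq 0$; when $U=0$ the scalar $r$ alone reduces $F_{0,\rho}$ to $F_{0,1}$. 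For irredundancy I would argue as in the text: an equality $(B,r)\cdot F_{e_1,\rho}=F_{e_1,\widetilde\rho}$ forces $r\det(B)Be_1=e_1$ and $r\rho=\widetilde\rho$, whence $|r|=1$ and $\rho=\widetilde\rho$, and $F_{0,1}$ lies in a separate orbit because the action preserves the condition $U=0$.

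For the isotropy groups the plan is to solve $(B,r)\cdot F=F$ for each representative $F$ using Lemma \ref{lem2}, which reduces the problem to $r=\pm 1$ together with the two conditions coming from $r\det(B)F_{BU,\rho}=F_{U,\rho}$, namely $r\det(B)BU=U$ and $r\det(B)\rho=\rho$. Writing $d=\det(B)\in\{1,-1\}$ and splitting according to whether $B\in\SO(2)$ or $B\in\SO^-(2)$, each case collapses to a pair of linear conditions on $B$ together with a sign condition relating $r$, $d$ and $\rho$. I would then run the three representatives through this scheme. For $F_{e_1,\rho}$ with $\rho>0$ the relation $rd\rho=\rho$ forces $rd=1$, after which $Be_1=e_1$ admits only the identity and the reflection fixing the line $\RR e_1$, giving the two listed elements with the sign of $r$ dictated by $d$. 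For $F_{e_1,0}$ the condition on $\rho$ is vacuous, so both $rd=1$ and $rd=-1$ occur; this is the source of the enhanced symmetry and of the four components, with $B$ now ranging over the four elements of $\Or(2)$ preserving $\RR e_1$. For $F_{0,1}$ the vanishing of $U$ removes every constraint on $B$, so $rd=1$ lets $B$ sweep out all of $\SO(2)$ with $r=1$, while $rd=-1$ lets it sweep out $\SO^-(2)$ with $r=-1$.

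The one point requiring genuine care — and the main obstacle — is the bookkeeping of $\det(B)$ acting on the center $\zz=\RR e_3$. Because an orthogonal automorphism coming from $B\in\Or(2)$ scales $e_3$ by $\det(B)$ (forced by $[e_1,e_2]=e_3$), the correct transformation of a Lorentz force is $\psi_*F_{U,\rho}\psi_*^{-1}=F_{\det(B)BU,\,\det(B)\rho}$, and it is precisely this twist on the center that couples each reflection to a definite sign of $r$ and that distinguishes the $\rho>0$ case from the degenerate $\rho=0$ case. Once this sign is tracked consistently across the $\SO(2)$ and $\SO^-(2)$ components, the three isotropy groups read off directly, each carrying $H_3$ as its left-translation semidirect factor, which yields the stated descriptions.
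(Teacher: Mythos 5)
Your route is essentially the paper's: the paper reduces the action to $(B,r)\cdot F_{U,\rho}=r\det(B)F_{BU,\rho}$, normalizes $U$ by a rotation and a scalar, uses $(-\id,-1)$ to arrange $\rho\geq 0$, checks irredundancy exactly as you do, and then disposes of the isotropy groups with the one-line remark that ``explicit computations prove the statements.'' Your proposal is a correct fleshing-out of that plan, and your emphasis on the $\det(B)$-twist on the center is exactly the right place to put the care. (Two small slips in passing: the second orbit condition is $r\det(B)\rho=\widetilde\rho$ rather than $r\rho=\widetilde\rho$, though the conclusion survives since $|r\det(B)|=|r|$; and in case (iii) the condition is always $r\det(B)=1$, not ``$r\det(B)=-1$ on the $\SO^-(2)$ branch'' --- there $\det(B)=-1$ forces $r=-1$ so that the product is still $+1$.)

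The one substantive problem is your claim that the computation ``gives the two listed elements.'' Carried out honestly, it does not. For $\rho>0$ your own (correct) conditions are $r\det(B)=1$ and $Be_1=e_1$, so the nontrivial solution is the reflection $B=\mathrm{diag}(1,-1)$, i.e.\ $-S$ in the notation of the statement (where $S e_1=-e_1$, $Se_2=e_2$), paired with $r=-1$; indeed $(S,-1)\cdot F_{e_1,\rho}=F_{-e_1,\rho}\neq F_{e_1,\rho}$, and one can also see that $S_*$ fails to preserve $\ker F_{e_1,\rho}=\mathrm{span}\{e_2-\rho e_3\}$. Likewise for $\rho=0$ the four elements come out as $(\id,1)$, $(-\id,-1)$, $(S,1)$, $(-S,-1)$, with the signs of $r$ attached to $S$ and $-S$ the opposite of those displayed in the statement. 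So your computation, done correctly, conflicts with the statement as printed (the fix is to take $S$ to be the reflection fixing $e_1$, or equivalently to swap $S\leftrightarrow -S$ in (i) and (ii)); a proof must either identify the reflection correctly and flag this, or it silently asserts agreement where there is none. As written, your argument papers over exactly the sign bookkeeping you correctly identify as ``the one point requiring genuine care.''
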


To compute the isotropy subgroups, find $B:\hh_3\to \hh_3$ such that $(B,1)\cdot F_{e_1,\rho}=F_{e_1,\rho}$ or $(B,-1)\cdot F_{e_1,\rho}=-F_{e_1,\rho}$. Se more in \cite{OS3}.

\medskip



As above, for a left-invariant Lorentz force $F_{U,\rho}$ we write $U=\beta e_1+\alpha e_2\in\vv $. Since  left translations are symmetries of the magnetic equations for any invariant Lorentz force $F$, it suffices to compute the magnetic geodesics through the identity element.

Let $\exp:\hh_3 \to H_3$ be the exponential map, then we write as in \cite{OS3}, $\gamma(t) = \exp(x(t)e_1+y(t)e_2+z(t)e_3)$ such that $x(0)=y(0)=z(0)=0$ and $x'(0)=x_0,\, y'(0)=y_0,\, z'(0)=z_0$. The magnetic equations corresponding to $F_{U,\rho}$ are
	\begin{equation}\label{magnetic-Heis}
		\left\{ \begin{array}{rcl}
			x''(t)+\left(z'(t)+\frac12(y'(t) x(t) -x'(t) y(t))+\rho\right) (y'(t)+\beta) & = &  \rho \beta \\
			y''(t)-\left(z'(t)+\frac12(y'(t) x(t) -x'(t) y(t))+\rho\right) (x'(t)- \alpha)  & = & \rho \alpha \\
			z''(t)+\frac{1}{2}(x''(t)y(t)-x(t)y''(t)) & = & \beta x'(t)+\alpha y'(t).\\
		\end{array} \right.
	\end{equation}

	By making use of the action of  $\Iso(H_3)\times \RR^*$  on the set of left-invariant Lorentz forces, as in Proposition \ref{orbits}, magnetic geodesics arise by solving one of the following systems with the same initial conditions:  $x(0)=y(0)=z(0)=0$ and $x'(0)=x_0,\, y'(0)=y_0,\, z'(0)=z_0$.
	\begin{enumerate}[(i)]
		\item For $F_{0,\rho}$ (actually it suffices $\rho=1$)	\begin{equation}\label{magnetic-exact}
			\left\{ \begin{array}{rcl}
				x''(t)+(z_0+\rho) y'(t) & = &  0\\
				y''(t)-(z_0+\rho) x'(t)  & = & 0 \\
				z'(t)+\frac{1}{2}(x'(t)y(t)-x(t)y'(t)) & = & z_0.
			\end{array} \right.
		\end{equation}
\item For $F_{e_1,\rho}$ with $\rho\geq 0$:
	\begin{equation}\label{magnetic-type2HeisSimp2}
		\left\{ \begin{array}{rcl}
			x''(t)+( x(t)+z_0+\rho) (y'(t)+1) & = &  \rho\\
			y''(t)-( x(t)+z_0+\rho) x'(t)  & = & 0 \\
			z'(t)+\frac{1}{2}(x'(t)y(t)-x(t)(y'(t)+2)) & = & z_0.
		\end{array} \right.
	\end{equation}
	\end{enumerate}
	
	In \cite{EGM} the authors obtain the explicit solutions for (i) as:
	\begin{itemize}
		\item if $z_0+\rho\neq 0$, the solution is
		$$\left( \begin{matrix}
			x(t)\\
			y(t)
		\end{matrix}\right) = \frac{1}{z_0+\rho}\left( \begin{matrix}
			\sin(t(z_0+\rho)) & -1+\cos(t(z_0+\rho))\\
			1-\cos(t(z_0+\rho)) & \sin(t(z_0+\rho))
		\end{matrix}
		\right)\left( \begin{matrix} x_0\\ y_0 	\end{matrix}
		\right),$$
		and for $V_0=x_0e_1+y_0e_2$ set
		$$z(t)=\left(z_0+\frac{\Vert V_0\Vert^2}{2(z_0+\rho)}\right)t - \frac{\Vert V_0\Vert^2}{2(z_0+\rho)^2}\sin(t(z_0+\rho)).$$
		\item If $z_0=-\rho$ the solution is 
		$\gamma(t)=\exp(t(x_0e_1+y_0 e_2+ z_0 e_3))$. 
	\end{itemize}
	
	For (ii), Lemma 3.1 in \cite{OS3} establishes a one-to-one correspondence between:
	$$\left\{\begin{array}{c}
		\gamma(t)=\exp(x(t)e_1+ y(t) e_2 + z(t)e_3):\\ \mbox{solutions of Equation \eqref{magnetic-type2HeisSimp2}}\\
		\gamma(0)=e \mbox{ and } \gamma'(0)=x_0e_1+y_0e_2+z_0e_3
	\end{array}\right\}  \longleftrightarrow     \left\{\begin{array}{c}
		x(t): \mbox{solution of Equation }\\
		x''(t)+h'(x(t)) h(x(t))=\rho \\
		x(0)=0 \mbox{ and } x'(0)=x_0
	\end{array}\right\} $$ 
	where	$h(x)= \frac{x^2}{2}+(z_0+\rho)x+y_0+1$.

	Moreover, the functions $y(t)$ and $z(t)$ are determined by $x(t)$ as follows
	\begin{equation}\label{eqmagneticHeisy}y(t)=\int_{0}^{t}\left(\frac{x(s)^2}{2}+(z_0+\rho)x(s)+y_0\right)ds,\end{equation} 
	\begin{equation}\label{eqmagneticHeisz}z(t)=-\frac{1}{2}x(t)y(t)-(z_0+\rho)y(t)-x'(t)+x_0.\end{equation}
	while $x(t)$ is solution of the equation
	$$
	x''(t)+h'(x(t)) h(x(t))=\rho, \qquad 
	x(0)=0 \mbox{ and } x'(0)=x_0,
$$ 
where	$h(x)= \frac{x^2}{2}+(z_0+\rho)x+y_0+1$.

Thus, the function $x(t)$ solve the equation:
\begin{equation}\label{Eqonev}
	x'(t)^2 =\Vert V_0+e_2\Vert ^2-h(x(t))^2 + 2\rho x(t).
\end{equation}
We summarize below the main results we shall need later. 	As proved in \cite{OS3} the following polynomial plays a role in the description of solutions 
	\begin{equation}\label{polinQ}
	\begin{array}{rcl}	P(\eta) & = &  
		\Vert V_0+e_2\Vert^2-\left(\frac{\eta^2}{2}+y_0+1-\frac{(z_0+\rho)^2}{2}\right)^2+2\rho (\eta-z_0-\rho)\\
	 & = &  	-\frac{1}{4}\left(\eta^4 + 2p_0 \eta^2 -8\rho \eta + q_0  \right) 
	\end{array}
	\end{equation}  
where
$$
p_0=2(y_0+1) - (z_0+\rho)^2  \ \mbox{ and } \  q_0=p_0^2+8\rho(z_0+\rho) - 4\Vert V_0+e_2\Vert^2.
$$
Let $\Delta$ denote the discriminant of $P$ and $r_1$, $r_2$, $r_3$ and $r_4$ the roots of $P$ in $\CC$. It was proved in \cite{OS3} that
\begin{itemize}
	\item If $\Delta<0$ then 
 \begin{equation}\label{solcase1}
		x(t)=\frac{\left(r_1 \delta_4-r_4 \delta_1\right)\mathrm{cn}\left(\frac{\sqrt{\delta_1 \delta_4}}{2}t + C_1,k\right)+r_1 \delta_4 + r_4 \delta_1 }{\left(\delta_4-\delta_1\right)\mathrm{cn}\left(\frac{\sqrt{\delta_1 \delta_4}}{2}t+C_1,k\right) + \delta_1 + \delta_4} -z_0-\rho,
	\end{equation}
	where $\mathrm{cn}$ is the cosine amplitude, Jacobi's elliptic function, with 
	
	$\delta_1=|r_1-r_2|=\sqrt{2p_0+2r_1^2+(r_1+r_4)^2}$, and 
	$\delta_4=|r_4-r_2|=\sqrt{2p_0+2r_4^2+(r_1+r_4)^2}$,

	$k^2=\frac{(r_4-r_1)^2 - (\delta_4-\delta_1)^2}{4\delta_1 \delta_4}$ and $C_{1}$ is a constant.  In this case $r_1 < r_4\in\RR$ and $r_2 =\overline{r_3}\in\CC-\RR$.  
	
	\item if $\Delta>0$, then 
		\begin{equation}\label{solcase21}	
		x(t)=r_4-\frac{r_4-r_1}{1+\frac{r_2-r_1}{r_4-r_2}\mathrm{sn}^2\left(\frac{\sqrt{(r_4-r_2)(r_3-r_1)}}{4}	t + C_{21},k_1\right)}-z_0-\rho,
	\end{equation}
	where $k_1=\sqrt{\frac{(r_4-r_3)(r_2-r_1)}{(r_4-r_2)(r_3-r_1)}}$ and $C_{21}$ is a constant. Here all the roots are real satisfying $r_1 < r_2 < r_3 < r_4$. 

	\item $\Delta=0$.   Let $r\in\RR$ be the root of $P$ with multiplicity two and consider $\mu=\frac{p_0 +3r^2}{2}=\frac{1}{4}(r-r_2)(r-r_3)$. The roots of $P$ are $r$, $r_2=-r-\sqrt{-2(p_0+r^2)}$   and $r_3=-r+\sqrt{-2(p_0+r^2)}$. Since they are real, we have that $p_0+r^2<0$.
	
	If $\mu=\frac{p_0 +3r^2}{2}>0$ the solution of \eqref{eqmagneticHeisx} is given by 
	\begin{equation}\label{sol3a}
		x(t)=\frac{-2\mu }{r+\sqrt{r^2-\mu}\cos\left(\sqrt{\mu}\,t-C_4\right)}+r-z_0-\rho,
	\end{equation}
	where
	$C_4$ is a constant. 
	
	If $\mu=\frac{p_0 +3r^2}{2}<0$. There are two possibilities, either   $r_2=-r-\sqrt{-2(p_0+r^2)}\leq z_0+\rho<r$ or $r<z_0+\rho\leq r_3= -r+\sqrt{-2(p_0+r^2)}$.
	
	If $r<z_0+\rho\leq -r+\sqrt{-2(p_0+r^2)}$, the solution is
	\begin{equation}\label{sol3b1}
		x(t)=\frac{- 2\mu
		}{r+\sqrt{r^2-\mu} \cosh(\sqrt{-\mu}\,t+C_5)}+r-z_0-\rho,
	\end{equation}
	where
	$C_5$ is a constant. 

	If $-r-\sqrt{-2(p_0+r^2)}\leq z_0+\rho<r$, we have the solution 
	\begin{equation}\label{sol3b2}
		x(t)=\frac{- 2 \mu
		}{r-\sqrt{r^2-\mu} \cosh(\sqrt{-\mu}\,t+C_6)}+r-z_0-\rho,
	\end{equation}
	where
	$C_6$ is a constant. 
	
	

	\smallskip
	
 Finally in the case that $\mu=\frac{p_0 +3r^2}{2}=0$, the solution of Equation \eqref{eqmagneticHeisx} is given by
	\begin{equation}\label{sol3c}
		x(t)=\frac{ -4r}{1+r^2(t+C_7)^2}+r-z_0-\rho,
	\end{equation}
	where $C_7$ is constant.  
\end{itemize}

\begin{rem}\label{remperiod}
	The solution $x(t)$ is periodic whenever $\Delta \neq 0$ or $\Delta = 0$ with $\mu > 0$. The periods are given by
	\begin{itemize}
		\item $\frac{8K(k)}{\sqrt{\delta_1 \delta_2}}$ if $\Delta < 0$,
			\item $\frac{2\pi}{\sqrt{\mu}}$ if $\Delta = 0$ and $\mu > 0$,
		\item $\frac{8K(k)}{\sqrt{(r_4-r_2)(r_3-r_1)}}$ if $\Delta > 0$,
	
	\end{itemize}
	where $K$ denotes the complete elliptic integral of the first kind.
\end{rem}

	\section{Periodic magnetic trajectories on $H_3$}\label{section5}
	
	In this section we focus on closed magnetic trajectories on the Heisenberg Lie group $H_3$. We begin by recalling some definitions.
	
	A curve $\gamma:I\to M$ is said to be {\em non-simple} if there exist $t_1<t_2$ in $I$ such that $\gamma(t_1)=\gamma(t_2)$, while  a curve $\gamma:\RR\to M$ is called {\em periodic} if there exist $\omega>0$ such that $\gamma(t)=\gamma(t+\omega)$ for all $t\in\RR$.

	
	\begin{prop}\label{closedperiodic}
		Let $\gamma:\RR\to M$ be a magnetic trajectory for a left-invariant Lorentz force $F_{e_1, \rho}$ on the Heisenberg Lie group $H_3$. If $\gamma$ is non-simple then it is periodic.
		
	\end{prop}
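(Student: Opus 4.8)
The plan is to reduce the non-simplicity hypothesis to equalities among the coordinate functions and then recover periodicity from uniqueness of solutions of an autonomous system. Since $H_3$ is simply connected, the exponential is a diffeomorphism, so for $\gamma(t)=\exp(x(t)e_1+y(t)e_2+z(t)e_3)$ the equality $\gamma(t_1)=\gamma(t_2)$ is equivalent to the three scalar equalities $x(t_1)=x(t_2)$, $y(t_1)=y(t_2)$ and $z(t_1)=z(t_2)$. The goal is to upgrade these into an equality of the full state at $t_1$ and $t_2$, and then to conclude that $\gamma(t+\omega)=\gamma(t)$ for $\omega=t_2-t_1$.

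First I would note that, along such a trajectory, the velocities $x'$ and $y'$ are determined by the position data. Differentiating \eqref{eqmagneticHeisy} gives $y'(t)=\tfrac12 x(t)^2+(z_0+\rho)x(t)+y_0$, so $y'$ is a function of $x$ alone; hence $x(t_1)=x(t_2)$ already forces $y'(t_1)=y'(t_2)$. The decisive identity is \eqref{eqmagneticHeisz}, namely $z(t)=-\tfrac12 x(t)y(t)-(z_0+\rho)y(t)-x'(t)+x_0$, which is affine in $x'(t)$ with nonzero coefficient; thus $x'(t)$ is recovered from $x(t),y(t),z(t)$. Substituting the three coordinate equalities therefore yields $x'(t_1)=x'(t_2)$.

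At this stage the state $(x,y,z,x',y')$ --- exactly the data determining a solution of the autonomous system \eqref{magnetic-type2HeisSimp2}, two second-order equations for $x,y$ together with a first-order equation for $z$, the quantities $z_0,\rho$ being fixed parameters --- agrees at $t_1$ and $t_2$. Its right-hand side is polynomial, so solutions are unique; consequently the translated curve $t\mapsto\gamma(t+\omega)$ solves the same system with the same state at $t=t_1$ and must coincide with $\gamma$ for all $t$. Hence $\gamma$ is periodic of period $\omega=t_2-t_1>0$.

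The step carrying all the content, and the one I expect to be the main obstacle, is the passage from coinciding positions to coinciding velocities. A priori a self-intersection $\gamma(t_1)=\gamma(t_2)$ need only return to a point of $H_3$ with a possibly reflected velocity, producing a transverse self-crossing rather than a genuine periodic return: indeed constant speed by itself yields only $x'(t_1)=\pm x'(t_2)$. What excludes the reflected sign is precisely the linear appearance of $x'$ in \eqref{eqmagneticHeisz}, so I would be careful to invoke all three coordinate equalities, not merely $x(t_1)=x(t_2)$, in order to pin the sign and match the entire state.
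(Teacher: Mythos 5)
Your proof is correct and follows essentially the same route as the paper: both recover $x'$ at the self-intersection times from the position data via \eqref{eqmagneticHeisz} (and $y'$ via \eqref{eqmagneticHeisy}) and then conclude by uniqueness of solutions of the autonomous system, the paper applying uniqueness to the scalar equation \eqref{eqmagneticHeisx} and you to the full system \eqref{magnetic-type2HeisSimp2}. The only point you leave implicit is that \eqref{eqmagneticHeisy}--\eqref{eqmagneticHeisz} are stated for trajectories with $\gamma(0)=e$, so for a general non-simple trajectory one must first apply a left translation and a parameter shift (both symmetries of the equation) to place the self-intersection at the identity, as the paper does in its opening sentence.
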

	\begin{proof}
		We have already seen that magnetic trajectories on $H_3$ are defined for every $t\in\RR$. By performing a group translation and a translation of the parameter, we can assume that the non-simple magnetic curve $\gamma(t)=\exp(x(t)e_1+y(t)e_2+z(t)e_3)$ passes through the identity element and intersects itself there. Thus, for a positive parameter $\omega$, we have $x(\omega) = y(\omega) = z(\omega) = 0$. 
		
		As already explained the magnetic trajectory of $F_{e_1,\rho}$ depends on the solution of
			\begin{equation}\label{eqmagneticHeisx}
			x''(t)+( x(t)+z_0+\rho) \left(\frac{ x(t)^2}{2}+(z_0+\rho) x(t)+y_0+1\right) =  \rho.
		\end{equation} Verify that evaluating Equation  \eqref{eqmagneticHeisz} at  $t=\omega$ we get $x'(\omega)=x_0$. Consequently, both $x(t)$ and $x(t+\omega)$ are solutions of \eqref{eqmagneticHeisx} with the same initial conditions, and therefore $x(t)=x(t+\omega)$ for all $t$. Then Equation \eqref{eqmagneticHeisy} implies that $y'(t)$ is periodic with period $\omega$, which gives $y(t+\omega)-y(t)=cte. =y(\omega)-y(0)=0$ for all $t$. Finally, since both $x(t)$ and $y(t)$ are periodic of period $\omega$ we get from \eqref{eqmagneticHeisz} that $z(t)$ is also periodic with the same period. 	
	\end{proof}

	In view of the result above, from now on we shall investigate the existence of periodic magnetic trajectories. For us, these curves are the closed ones. 
	
	Recall that, by Lemma \ref{lem2}, two Lorentz forces lying in the same orbit under the action of $\Iso(M)\times\RR^*$ (see  \eqref{action1}) determine equivalent systems and the isotropy subgroup of a fixed Lorentz force gives a symmetry group of the corresponding magnetic equation. Moreover, the induced action on curves transforms periodic magnetic trajectories into periodic ones. 
	
	The classes of Lorentz forces were determined in Proposition \ref{orbits} so as the isotropy subgroups.  We showed that $F_{0,1}$ and $F_{e_1,\rho}$ for $\rho\geq 0$ are representatives of each orbit and that the isotropy subgroup of each Lorentz force $F$ contains the group $H_3\rtimes \left( \{(Id,1)\} \cup \{(S,-1)\}\right)$. Then we can focus, for each one of the mentioned Lorentz forces, on the magnetic trajectories that verify $\gamma(0)=e$ and $\gamma'(0)=x_0e_1+y_0e_2+z_0e_3$ with $x_0\geq0$.

	The orbit of $F_{0,1}$ corresponds to left-invariant exact Lorentz forces. In Theorem 2 of \cite{EGM} it is proved that in this case there are periodic magnetic trajectories only on low energy levels. Observe that the definitions of energy and of the coordinate $z_0$ given there, are slightly different from ours. More precisely, we can rewrite their theorem with our notation:
	\begin{thm}\label{periodicexact}
		\cite{EGM} For the Lorentz force $F_{0,\rho}$, there exist periodic magnetic trajectories with energy $\mathcal E$ if and only $0<\mathcal E<\frac{\rho^2}{2}$. For any $0<\mathcal E<\frac{\rho^2}{2}$, let $z_0 = -\rho+\sgn(\rho) \sqrt{\rho^2-2 \mathcal E}$ and let $x_0$ and $y_0$ be any numbers such that $x_0^2+y_0^2=2 \mathcal E-z_0^2$. Then, the magnetic trajectory with initial condition $\gamma(0)=e$ and $\gamma'(0)=x_0e_1+y_0e_2+z_0e_3$ will be periodic with  energy $\mathcal E$.
	\end{thm}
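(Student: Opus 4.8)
The plan is to read periodicity directly off the closed-form solution recorded in Example~\ref{exacsolutions}, so that no new integration is required. First I would reduce to a trajectory through the identity: since both the metric and the exact Lorentz force $F_{0,\rho}$ are left-invariant, the pair $(L_{\gamma(0)^{-1}},1)\in\Iso(H_3)\times\RR^*$ is a symmetry of the magnetic equation by Lemma~\ref{lem2}, and by Remark~\ref{presenergy} it preserves both periodicity and the energy $\En(\gamma)=\tfrac12(x_0^2+y_0^2+z_0^2)$. Thus there is no loss in assuming $\gamma(0)=e$ and $\gamma'(0)=x_0e_1+y_0e_2+z_0e_3$, and in writing $2E=x_0^2+y_0^2+z_0^2$, equivalently $\Vert V_0\Vert^2=x_0^2+y_0^2=2E-z_0^2$.

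Next I would separate the two regimes of Example~\ref{exacsolutions}. If $z_0=-\rho$ the solution is the one-parameter subgroup $\gamma(t)=\exp\big(t(x_0e_1+y_0e_2+z_0e_3)\big)$, which is periodic only when constant, i.e. when $E=0$; hence a nontrivial periodic orbit forces $z_0+\rho\neq0$. In that case the planar part $(x(t),y(t))$ is manifestly $\tfrac{2\pi}{|z_0+\rho|}$-periodic, and the sole obstruction to periodicity of $\gamma$ is the secular (linear-in-$t$) term in
$$z(t)=\Big(z_0+\tfrac{\Vert V_0\Vert^2}{2(z_0+\rho)}\Big)t-\tfrac{\Vert V_0\Vert^2}{2(z_0+\rho)^2}\sin\big(t(z_0+\rho)\big).$$
Since $\exp$ is a diffeomorphism, periodicity of the coordinate functions is equivalent to periodicity of $\gamma$, so $\gamma$ is periodic precisely when this coefficient vanishes, with common period $\tfrac{2\pi}{|z_0+\rho|}$.

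The remaining step is algebraic. The vanishing condition $z_0+\tfrac{\Vert V_0\Vert^2}{2(z_0+\rho)}=0$ reads $2z_0^2+2\rho z_0+\Vert V_0\Vert^2=0$; substituting $\Vert V_0\Vert^2=2E-z_0^2$ collapses it to
$$z_0^2+2\rho z_0+2E=0,\qquad z_0=-\rho\pm\sqrt{\rho^2-2E}.$$
Reality of $z_0$ forces $E\le\rho^2/2$, the requirement $z_0+\rho=\pm\sqrt{\rho^2-2E}\neq0$ rules out $E=\rho^2/2$, and $E>0$ is needed for a nonconstant orbit; together these give the range $0<E<\tfrac{\rho^2}{2}$. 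For the converse I would fix such an $E$ and check that the sign must be $\sgn(\rho)$: writing $s=\sqrt{\rho^2-2E}$ one finds $\Vert V_0\Vert^2=2E-z_0^2=2s(|\rho|-s)$ for the $\sgn(\rho)$ root (and $-2s(|\rho|+s)<0$ for the other), which is positive exactly because $0<s<|\rho|$ on the open interval. Choosing any $x_0,y_0$ with $x_0^2+y_0^2=2E-z_0^2$ then yields, via Example~\ref{exacsolutions}, a genuinely periodic trajectory of energy $E$.

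I expect the only real subtlety to be this final sign bookkeeping—selecting the admissible root $z_0=-\rho+\sgn(\rho)\sqrt{\rho^2-2E}$ and confirming that it keeps $\Vert V_0\Vert^2$ nonnegative across the whole interval. Everything else is a direct reading of the explicit solution: isolating the secular term and matching the periodicity condition against the energy constraint.
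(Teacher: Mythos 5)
Your argument is correct and complete. Note, however, that the paper does not actually prove Theorem~\ref{periodicexact}: it is imported verbatim (after a change of conventions) from Theorem~2 of \cite{EGM}, and the surrounding text only remarks that the definitions of energy and of $z_0$ there differ slightly from the ones used here. So your proposal is not an alternative to the paper's proof but a self-contained verification that the paper omits. Your route --- reduce to the identity by a left translation (legitimate by Lemma~\ref{lem2} and Remark~\ref{presenergy}, which guarantee that translations preserve magnetic trajectories, periodicity and energy), discard the one-parameter-subgroup case $z_0=-\rho$ since $\exp$ is a diffeomorphism, and then observe that for $z_0+\rho\neq 0$ the only obstruction to periodicity in Example~\ref{exacsolutions} is the secular coefficient $z_0+\tfrac{\Vert V_0\Vert^2}{2(z_0+\rho)}$ --- is exactly the right mechanism, and the algebra checks out: substituting $\Vert V_0\Vert^2=2E-z_0^2$ gives $z_0^2+2\rho z_0+2E=0$, reality and the exclusion of $z_0=-\rho$ give $0<E<\rho^2/2$, and your computation $2E-z_0^2=2s\bigl(|\rho|-s\bigr)$ with $s=\sqrt{\rho^2-2E}$ correctly identifies $z_0=-\rho+\sgn(\rho)\sqrt{\rho^2-2E}$ as the unique admissible root (the other root forces $\Vert V_0\Vert^2<0$). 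This is worth recording, since it also confirms that the translation of \cite{EGM}'s statement into the paper's normalization of energy and of $z_0$ is the correct one.
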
 
	
	Thus, here we want to study periodic magnetic trajectories for the Lorentz forces $F_{e_1,\rho}$ with $\rho\geq0$. To characterize them we have the next lemma. 
	
	\begin{lem}\label{Propperiodic}
		A magnetic trajectory $\gamma(t)=\exp(x(t)e_1+y(t)e_2+z(t)e_3)$ for $F_{e_1,\rho}$ through the identity is periodic if and only if $x(t)$ is periodic of period $\omega$ and $y(\omega)=0$.
	\end{lem}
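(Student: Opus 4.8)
The plan is to exploit the fact that, since $H_3$ is simply connected, the exponential map is a diffeomorphism, so that $\gamma$ is periodic of period $\omega$ precisely when the three coordinate functions $x(t)$, $y(t)$, $z(t)$ are \emph{simultaneously} periodic of period $\omega$. The whole argument then reduces to tracking how periodicity of $x$ propagates to $y$ and $z$ through the explicit formulas \eqref{eqmagneticHeisy} and \eqref{eqmagneticHeisz} furnished by Lemma \ref{1solution}.

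The forward implication is immediate: if $\gamma$ is periodic of period $\omega$, then each coordinate function is periodic of period $\omega$; in particular $x$ is periodic, and since the initial condition gives $y(0)=0$, periodicity forces $y(\omega)=y(0)=0$.

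For the converse, assume $x$ is periodic of period $\omega$ and $y(\omega)=0$. Differentiating \eqref{eqmagneticHeisy} gives $y'(t)=\frac{x(t)^2}{2}+(z_0+\rho)x(t)+y_0$, which is a function of $x(t)$ alone and hence periodic of period $\omega$. The key step is then to observe that the increment $y(t+\omega)-y(t)$ has vanishing derivative $y'(t+\omega)-y'(t)=0$, so it is constant and equal to its value at $t=0$, namely $y(\omega)-y(0)=y(\omega)=0$. Thus $y$ itself is periodic of period $\omega$. Finally, formula \eqref{eqmagneticHeisz} expresses $z(t)$ as a combination of $x(t)$, $x'(t)$ and $y(t)$, each of which is now periodic of period $\omega$; hence $z$ is periodic of period $\omega$ as well, and $\gamma$ is periodic.

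There is no serious obstacle here: the statement is essentially a bookkeeping consequence of the reduction in Lemma \ref{1solution}, where $y'$ depends only on $x$ and $z$ depends only on $x$, $x'$ and $y$. The one point that deserves care is the constancy of the increment $y(t+\omega)-y(t)$, which is precisely what converts the single scalar condition $y(\omega)=0$ into genuine periodicity of $y$, rather than mere periodicity of $y'$; without the hypothesis $y(\omega)=0$ the function $y$ would in general carry a linear drift and $\gamma$ would fail to close up.
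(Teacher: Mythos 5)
Your proof is correct and follows essentially the same route as the paper: both rely on the explicit formulas \eqref{eqmagneticHeisy} and \eqref{eqmagneticHeisz} from Lemma \ref{1solution} to propagate periodicity from $x$ to $y$ and $z$. The only organizational difference is that the paper merely checks $z(\omega)=0$ (so that $\gamma(\omega)=e$) and then invokes Proposition \ref{closedperiodic}, whereas you inline the constancy-of-increment argument that already appears in the proof of that proposition.
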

	
	\begin{proof}
		$\Rightarrow)$ Trivial. 
		
		$\Leftarrow)$ The periodicity of $x(t)$ implies that $x'(t)$ is also periodic and $x'(\omega)=x'(0)=x_0$. By evaluating Equation \eqref{eqmagneticHeisz} in the period  $\omega$, one gets that $z(\omega)=0$. Thus, it holds   $x(\omega)=y(\omega)=z(\omega)=0$ and from Proposition \ref{closedperiodic} the trajectory is periodic.
	\end{proof}
	
	In summary, to check periodicity, it suffices to verify that $y(\omega)=0$ whenever $x(t)$ is periodic of period $\omega$. In the previous section we recalled the solutions obtained in \cite{OS3}. We will use them to proved the following proposition.

\begin{prop}
	Let $\rho\geq0$. For the Lorentz force  $F_{e_1,\rho}$ on the Heisenberg Lie group, there are no periodic magnetic trajectories corresponding to $\Delta\geq0$. 
\end{prop}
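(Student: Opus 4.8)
The plan is to reduce everything to Lemma~\ref{Propperiodic}, which says that a trajectory through the identity for $F_{e_1,\rho}$ is periodic precisely when $x(t)$ is periodic of period $\omega$ and, in addition, $y(\omega)=0$. First I would dispose of the configurations in which $x(t)$ is not even periodic: inspecting Table~\ref{table:1}, among the rows with $\Delta\geq 0$ the function $x(t)$ fails to be periodic exactly when $\Delta=0$ with $\mu<0$ (both sign choices of $z_0+\rho-r$) and when $\Delta=0$ with $p_0^2+3q_0=0$. These three rows cannot produce a periodic trajectory and are eliminated immediately. What remains is to show that in the genuinely oscillatory cases, namely $\Delta>0$ (for either admissible well) and $\Delta=0$ with $\mu>0$, one always has $y(\omega)\neq 0$.

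The second step is to convert $y(\omega)$ into a single phase-space integral. Since $y'(t)=h(x(t))-1$ and, by \eqref{Eqonev}, $x'(t)^2=P(\eta)$ with $\eta=x+z_0+\rho$, completing the square gives $h=\tfrac12(\eta^2+p_0)$, and integrating over one full period (the increasing and decreasing phases of $x$ contribute equally) yields
\begin{equation*}
y(\omega)=2\int_{\eta_-}^{\eta_+}\frac{h-1}{\sqrt{P(\eta)}}\,d\eta=\int_{\eta_-}^{\eta_+}\frac{\eta^2+p_0-2}{\sqrt{P(\eta)}}\,d\eta,
\end{equation*}
where $[\eta_-,\eta_+]$ is the oscillation interval bounded by two consecutive real roots of the quartic $P$ in \eqref{polinQ}. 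The goal becomes to prove this integral is strictly negative, hence nonzero.

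The case $\Delta=0$, $\mu>0$ is elementary and I would carry it out directly. Here $x(t)$ is the trigonometric solution \eqref{sol3a}, so $\eta(t)=r-2\mu\bigl(r+\sqrt{r^2-\mu}\cos\phi\bigr)^{-1}$ with $\phi=\sqrt{\mu}\,t-C_4$, and $y(\omega)=\tfrac1{\sqrt\mu}\int_0^{2\pi}\tfrac12(\eta^2+p_0-2)\,d\phi$. Using the standard values of $\int_0^{2\pi}(r+\sqrt{r^2-\mu}\cos\phi)^{-1}d\phi$ and $\int_0^{2\pi}(r+\sqrt{r^2-\mu}\cos\phi)^{-2}d\phi$ (both well defined because $\mu>0$), the two $\phi$-dependent contributions cancel and one finds the closed form $y(\omega)=\tfrac{\pi}{\sqrt\mu}(r^2+p_0-2)$. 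Since genuine oscillation forces the half-width $\sqrt{-2(p_0+r^2)}$ in Table~\ref{table:1} to be real and positive, i.e. $p_0+r^2\leq 0$, we get $r^2+p_0-2\leq -2<0$, so $y(\omega)<0$.

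The case $\Delta>0$ is the main obstacle, since the integral is genuinely elliptic. The plan is to reduce the two moments $I_0=\int_{\eta_-}^{\eta_+}d\eta/\sqrt{P}$ and $I_2=\int_{\eta_-}^{\eta_+}\eta^2\,d\eta/\sqrt{P}$ to complete elliptic integrals via the same Byrd--Friedman substitutions already used for \eqref{solcase21}, so that $y(\omega)=I_2+(p_0-2)I_0$ becomes an explicit combination of $K(k_1)$, of the complete elliptic integral of the second kind $E(k_1)$, and of the roots $r_i$, which I then show is strictly negative. The mechanism is transparent in the harmonic sub-case $\rho=0$, where $P$ is even, the well is $[a,b]$ with $\sqrt{P}=\tfrac12\sqrt{(\eta^2-a^2)(b^2-\eta^2)}$ and $p_0=-(a^2+b^2)/2$; the substitution $\eta^2=b^2-(b^2-a^2)\sin^2\phi$ gives $y(\omega)=2\bigl(bE(k)+(p_0-2)K(k)/b\bigr)$ with $k^2=(b^2-a^2)/b^2$, and negativity follows from the inequality $(2-k^2)K(k)>2E(k)$ for $0<k<1$, which I would prove by checking that $g(k)=(2-k^2)K(k)-2E(k)$ satisfies $g(0)=0$ and $g'(k)=\tfrac{k}{1-k^2}\bigl(E(k)-(1-k^2)K(k)\bigr)>0$. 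The genuine difficulty, and the part I expect to take real work, is the analogous estimate for $\rho>0$ and for both wells: there the quartic is no longer even, a third-kind integral $\Pi$ enters $I_2$, and the corresponding strict inequality forcing $y(\omega)<0$ must be established by hand.
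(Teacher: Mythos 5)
Your reduction is sound as far as it goes: you correctly identify, via Table \ref{table:1} and Lemma \ref{Propperiodic}, that the only cases needing the test $y(\omega)\neq 0$ are $\Delta>0$ (either well) and $\Delta=0$ with $\mu>0$; your rewriting of $y(\omega)$ as the phase-space integral $\int_{\eta_-}^{\eta_+}(\eta^2+p_0-2)\,d\eta/\sqrt{P(\eta)}$ is a clean (and arguably tidier) reformulation of what the paper does by substituting the explicit Jacobi-elliptic solutions; and your treatment of $\Delta=0$, $\mu>0$ reproduces the paper's closed form $y(\omega)=\tfrac{\pi}{\sqrt{\mu}}(p_0+r^2-2)<0$ with essentially the paper's justification that $p_0+r^2\le 0$.

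The genuine gap is the case $\Delta>0$ with $\rho>0$, which is the generic and central case of the proposition, and which you explicitly defer ("must be established by hand"). A plan to reduce $I_0$ and $I_2$ to complete elliptic integrals is not yet a proof that the resulting combination is negative, and the even sub-case $\rho=0$ does not indicate how to control the asymmetric quartic. This is exactly where the paper does its real work: evaluating $y(\omega)$ with the appendix integrals \eqref{int1cn}--\eqref{int1cn2}, one finds that the third-kind contributions $\Pi(-A,k_1)$ you worry about \emph{cancel identically} (the coefficient $8r_4(r_4-r_1)/\sqrt{(r_4-r_2)(r_3-r_1)}$ appears once with each sign), and after applying Vi\`ete's relations the result collapses to
\begin{equation*}
y(\omega)=2\sqrt{(r_4-r_2)(r_3-r_1)}\left(E(k_1)-K(k_1)-\frac{4+(r_2+r_3)^2}{(r_4-r_2)(r_3-r_1)}\,K(k_1)\right),
\end{equation*}
which is manifestly negative because $0<E(k_1)<K(k_1)$ and the extra term is a negative multiple of $K(k_1)$; the same formula covers both wells. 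Without carrying out this computation (or an equivalent monotonicity/comparison argument valid for the non-even quartic), your proposal does not establish the proposition: the decisive inequality in the main case is asserted as future work rather than proved.
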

\begin{proof}
	From  Lemma \ref{Propperiodic} we must see when
	\begin{equation}\label{y(w)}
		y(\omega)=\displaystyle \int \limits_{0}^{\omega}  \left(\frac{x(t)^2}{2}+(z_0+\rho)x(t)+y_0\right)dt,
	\end{equation}
	is zero in the cases where $x(t)$ is periodic of period $\omega$.

	If $\Delta >0$ and 	 $r_1 \leq z_0+\rho \leq r_2$, the solution $x(t)$ is given by \eqref{solcase21} and it is periodic of period $\omega=\frac{8K(k)}{\sqrt{(r_4-r_2)(r_3-r_1)}}$.
	
	As in the Appendix, we denote by $K(k)$ and $E(k)$  the complete elliptic integral of the first and second kind, respectively, while $\Pi(\alpha^2,k)$ is the complete elliptic integral of the third kind.

	 Next, we substitute into Equation \eqref{y(w)} using the change of variable $s=\frac{\sqrt{(r_4-r_2)(r_3-r_1)}}{4}\,t + C_{21}$, together with the fact that $\mathrm{sn}^2(s, k_1)$ has period $2K(k)$ and the integrals \eqref{Equ46} and \eqref{Equ47} in the Appendix, to obtain 
	\begin{align*}
		y(\omega) &=\frac{4}{\sqrt{(r_4-r_2)(r_3-r_1)}}\int \limits_{0}^{2K(k)}  \left(\frac{1}{2}\frac{(r_4-r_1)^2}{\left(1+\frac{r_2-r_1}{r_4-r_2} \mathrm{sn}^2(s, k_1)\right)^2}+\frac{r_4(r_1-r_4)}{1+\frac{r_2-r_1}{r_4-r_2} \mathrm{sn}^2(s, k_1)}\right.\\
		&\left.+\frac{1}{2}r_4^2-\frac{1}{2}(z_0+\rho)^2+y_0\right)ds\\
		&=\frac{2(r_4-r_1)}{\sqrt{(r_4-r_2)(r_3-r_1)}}\left(4r_4\Pi(-A,k_1)+\frac{(r_4-r_2)(r_3-r_1)}{r_4-r_1}E(k_1)-(r_4-r_2)K(k_1)\right)\\
		&-8\frac{r_4(r_4-r_1)}{\sqrt{(r_4-r_2)(r_3-r_1)}}\Pi(-A,k_1) +\frac{4 (r_4^2+2y_0-(z_0+\rho)^2)}{\sqrt{(r_4-r_2)(r_3-r_1)}} K(k_1)\\
		&=2\sqrt{(r_4-r_2)(r_3-r_1)} \left(E(k_1)-\frac{r_1 r_2 +r_3 r_4-4y_0+2(z_0+\rho)^2}{(r_4-r_2)(r_3-r_1)} K(k_1)\right)\\
		&=2\sqrt{(r_4-r_2)(r_3-r_1)} \left(E(k_1)-K(k_1)-\frac{4+(r_2+r_3)^2}{(r_4-r_2)(r_3-r_1)} K(k_1)\right).
	\end{align*}
 	Since $0<E(k_1)<K(k_1)$ for $k_1>0$ we conclude that $y(\omega)<0$. 
	
	For $\Delta>0$ and $r_3 \leq z_0+\rho \leq r_4$, we arrive analogously to the same formula for $y(\omega)$  so it is also non-zero. 
	
	Now consider $\Delta=0$. The only case where $x(t)$ is periodic is when $\mu>0$. Moreover $x(t)$ is given by Equation  \eqref{sol3a} and it has period $\omega=\frac{2\pi}{\sqrt{\mu}}$. We use the integrals in Equation (50) in the Appendix to compute
	\begin{align*}
		y(\omega) =&=\frac{1}{\sqrt{\mu}}\int \limits_{0}^{2\pi}  \left(\frac{2\mu^2}{\left(r+\sqrt{r^2-\mu}\cos(s)\right)^2}-\frac{2\mu r}{r+\sqrt{r^2-\mu}\cos(s)}+\frac{1}{2}r^2-\frac{1}{2}(z_0+\rho)^2+y_0\right)ds\\
		&= \frac{1}{\sqrt{\mu}}(p_0+r^2-2)\pi
	\end{align*}
	Recall that the observation above Equation \eqref{sol3a} shows that $p_0+r^2<0$, so $y(\omega)$ is negative too. This proves the non-existence of periodic trajectories for $\Delta\geq0$.
\end{proof}

For $\Delta<0$, the magnetic trajectories have $x(t)$ periodic. Hence, applying Lemma \ref{Propperiodic} as before, it suffices to determine when $y(\omega)$ in Equation \eqref{y(w)} vanishes, where $\omega=\frac{8K(k)}{\sqrt{\delta_1\delta_4}}$ is the period of $x(t)$.
Substituting $x(t)$ from Equation \eqref{solcase1} and using the change of variable $s=\frac{\sqrt{\delta_1\delta_4}}{2}t+C_1$, we obtain (see Equations \eqref{int1cn} and \eqref{int1cn2} in the Appendix)
	
			\begin{align*}	y(\omega)  = & \int \limits_{0}^{\omega}  \left(\frac{x(t)^2}{2}+(z_0+\rho)x(t)+y_0\right)dt\\
				=&\frac{(\delta_1+\delta_4)(\delta_1^2-\delta_4^2-2r_1^2+2r_4^2)}{\sqrt{\delta_1 \delta_4}(\delta_4-\delta_1)}\,\Pi\left(-\frac{(\delta_4-\delta_1)^2}{4\delta_1 \delta_4}, k\right) + 4 \sqrt{\delta_1 \delta_4} E(k)\\
				&  +\frac{4}{\sqrt{\delta_1 \delta_4}}\left(\frac{\delta_4 r_1^2 - \delta_1 r_4^2}{\delta_4-\delta_1} +2y_0 - (z_0+\rho)^2\right) K(k).
			\end{align*}
			We can simplify this expression by recalling that
			
			$\delta_1=\sqrt{2p_0+2r_1^2+(r_1+r_4)^2}$ and $\delta_4=\sqrt{2p_0+2r_2^2+(r_1+r_4)^2}$, from which it follows that
			\begin{equation}\label{eqdifdistanc}
				\delta_1^2-\delta_4^2=2r_1^2 - 2r_4^2.
			\end{equation}	
			Thus the coefficient corresponding to the function $\Pi$ vanishes. Furthermore, we can use Equation \eqref{eqdifdistanc} to rewrite:
			
			\begin{align*}
				2\frac{\delta_4 r_1^2 - \delta_1 r_4^2}{\delta_4-\delta_1}  = & \frac{(r_1^2+r_4^2)(\delta_4-\delta_1)-(r_4^2-r_1^2)(\delta_1+\delta_4)}{\delta_4-\delta_1}\\  = & \frac{(r_1^2+r_4^2)(\delta_4-\delta_1)-\frac{1}{2}(\delta_4^2-\delta_1^2)(\delta_1+\delta_4)}{\delta_4-\delta_1}\\
				= & (r_1^2+r_4^2)-\frac{1}{2}(\delta_1+\delta_4)^2.
			\end{align*}
			
			On the other hand, from the definition of $\delta_1$ and $\delta_2$ we get
			\begin{equation*}\label{eqVieta}
				2y_0 - (z_0+\rho)^2 = p_0-2 = \frac{\delta_1^2+\delta_4^2}{4}-r_1^2-r_4^2-r_1r_4 -2,
			\end{equation*}
			and we conclude that:
			\begin{equation}\label{ywfunc1}
				y(\omega)=4 \sqrt{\delta_1 \delta_4} \left(E(k) -\frac{(r_1+r_4)^2+\delta_1\delta_4+4}{2\delta_1 \delta_4} K(k)\right).
			\end{equation}

			In order to work with this last expression it will be useful to reparametrize the set of initial conditions $x_0 e_1+y_0 e_2 +z_0 e_3 \in \hh_3$ with $x_0\geq0$ corresponding to $\Delta<0$, by using triples $(c,d,e)$ in some proper subset of $\mathbb R^3$. 
			
			Take $(c,d,e)\in(0,\infty)\times (0,1)\times [-1,1]$ defined in the following manner:
			$$c=\frac{\sqrt{2p_0+r_1^2+r_4^2}}{2} \qquad d =  \frac{r_4-r_1}{2\sqrt{2p_0+r_1^2+r_4^2}} \qquad  e =\frac{2}{r_4-r_1}\left(z_0+\rho-\frac{r_1+r_4}{2}\right),$$
			where $r_1 < r_4$ are the distinct real roots of the polynomial $P$, defined in Equation   \eqref{polinQ}. Since $\Delta<0$, $2p_0+r_1^2+r_4^2> 0$ and $r_1\leq z_0+\rho \leq r_4$, these variables are well-defined within their respective intervals.  Reciprocally, by considering 
			Vieta's formulas:
			\begin{eqnarray}
				r_2+r_3&=&-(r_1+r_4)\label{Viete1}\\
				r_2 r_3&=& 2p_0 + r_1^2 + r_4^2 + r_1 r_4\label{Viete2}\\
				(r_1+r_4)(2p_0 + r_1^2 + r_4^2) &=& 8\rho, \label{Viete3} 	
			\end{eqnarray} 
			 we obtain the initial conditions 
			\begin{equation}\label{condinitcd}
				\begin{array}{rcl}
					x_0 &=&\frac{2d}{c}\sqrt{1-e^2}\sqrt{(c^3de+\rho)^2+(1-d^2)c^6},   \\
					y_0& =& c^2(2d^2e^2-2d^2+1)+\frac{2de \rho}{c}-1,\\
					z_0&=& \frac{\rho}{c^2} + 2cde-\rho.
				\end{array}
			\end{equation}
			Furthermore, the roots of $P$ (Equation \eqref{polinQ}) obey the equations
			\begin{align}\label{rootscde}
				r_1 = \frac{\rho}{c^2} - 2cd  \qquad   r_4 = \frac{\rho}{c^2} + 2cd \qquad r_2 = -\frac{\rho}{c^2} - 2c\sqrt{1-d^2} i \qquad r_3 = -\frac{\rho}{c^2} + 2c\sqrt{1-d^2} i,
			\end{align}  
			while for $\delta_1=|r_1-r_2|$ and $\delta_4=|r_4-r_2|$ it holds
			$$
			\delta_1=\frac{2}{c^2}\sqrt{\rho^2+c^6-2\rho d c^3}  \qquad \delta_4=\frac{2}{c^2}\sqrt{\rho^2+c^6+2\rho d c^3}.
			$$

			Observe that the energy of the magnetic trajectory, as defined in \eqref{energy}, can be expressed in terms of $(c,d,e)$ by the formula
			
			\begin{equation}\label{Energycde}
				\En(c,d,e)=\frac{(c^4+\rho^2)(c^4+4c^2 d^2-2c^2+1)}{2c^4}.
			\end{equation}
			Note that this expression is independent of $e$.
			
			Finally, if $\gamma(t)=\exp(x(t)e_1+y(t)e_2+z(t)e_3)$ is the magnetic trajectory associated to $(c,d,e)$ and
			$\omega$ is the period of $x(t)$, then the value $y(\omega)$ given in Equation \eqref{ywfunc1} satisfies  $y(\omega)=\Psi(c,d,e)$, for $\Psi$ the function
			\begin{equation}\label{ywfunc2}\small
				\Psi(c,d,e)=\frac{8}{c^2} \sqrt[4]{(\rho^2+c^6)^2-4\rho^2 d^2 c^6} \left(E(k) -\left(\frac{\rho^2+c^4}{2\sqrt{(\rho^2+c^6)^2-4\rho^2 d^2 c^6}} +\frac{1}{2}\right) K(k)\right),
			\end{equation} 
			where $k=\sqrt{\frac{2c^6 d^2 -\rho^2-c^6}{2 \sqrt{(\rho^2+c^6)^2-4\rho^2 d^2 c^6}}+\frac{1}{2}}$. The function $\Psi$ is defined on $(0,\infty)\times (-1,1) \times \RR$.

			\begin{lem}\label{unicityofperiodic}
				For any $(c,e)\in(1,\infty)\times[-1,1]$ there exists a unique $d=d_c\in (0,1)$ such that the magnetic trajectory $\gamma(t)$ through the identity with initial condition $\gamma'(0)=x_0 e_1+y_0 e_2 +z_0 e_3$, as in Equation \eqref{condinitcd},  is periodic.
				
				Furthermore,  these are all possible periodic magnetic trajectories through the identity with $x_0\geq0$.   
			\end{lem}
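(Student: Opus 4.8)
The plan is to reduce the periodicity requirement to a single scalar equation and then run a sign/monotonicity analysis on the elliptic expression \eqref{ywfunc2}. By Lemma \ref{Propperiodic}, a trajectory through the identity with $\Delta<0$ (for which $x(t)$ is periodic of period $\omega$) is periodic if and only if $y(\omega)=0$. Since $y(\omega)=\Psi(c,d,e)$ and the right-hand side of \eqref{ywfunc2} does not involve $e$, periodicity depends only on $(c,d)$, which already forces the desired $d$ to be a function $d_c$ of $c$ alone, as the statement asserts. Writing $W=\sqrt{(\rho^2+c^6)^2-4\rho^2c^6d^2}$ and factoring out the strictly positive prefactor $\tfrac{8}{c^2}\sqrt{W}$, the equation $\Psi=0$ becomes $E(k)=R\,K(k)$ with $R=R(c,d)=\tfrac{\rho^2+c^4}{2W}+\tfrac12$, and since $K(k)>0$ this is equivalent to the vanishing of
\[
g(c,d)=\frac{E(k)}{K(k)}-R(c,d),
\]
whose sign coincides with that of $\Psi$.

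Next I would assemble three monotonicity facts. First, $W$ is strictly decreasing in $d\in(0,1)$, so $R(c,\cdot)$ is strictly increasing (when $\rho=0$ everything degenerates to $k=d$ and the analysis below is immediate). Second, $E/K$ is the classical strictly decreasing function of $k\in(0,1)$. Third, differentiating $k^2=\tfrac{2c^6d^2-\rho^2-c^6}{2W}+\tfrac12$ shows that $\partial_d(k^2)$ has the sign of $c^6+\rho^2(1-2d^2)$; hence $k$ is strictly increasing on $(0,1)$ when $c^6\ge\rho^2$, while for $c^6<\rho^2$ it increases on $(0,d^*)$ and decreases on $(d^*,1)$ with $d^*=\sqrt{\tfrac12+\tfrac{c^6}{2\rho^2}}\in(0,1)$. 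Setting $d^*:=1$ when $c^6\ge\rho^2$, on the interval $(0,d^*)$ the function $g(c,\cdot)$ is the difference of a strictly decreasing and a strictly increasing function, hence strictly decreasing.

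Then comes the boundary analysis. As $d\to0^+$ one has $k\to0$ and $W\to\rho^2+c^6$, giving
\[
g(c,0^+)=1-R(c,0)=\frac{c^4(c^2-1)}{2(\rho^2+c^6)},
\]
which is positive precisely when $c>1$. On the tail $(d^*,1)$ (nonempty only when $c^6<\rho^2$) I would avoid monotonicity entirely: since $E/K<1$ for $k>0$, it suffices that $R(c,d)>1$ there, and as $R$ is increasing this follows from $R(c,d^*)>1$; evaluating $W$ at $d^*$ yields $W^*=\sqrt{\rho^4-c^{12}}$, so $R(c,d^*)>1\iff\rho^2+c^4>\sqrt{\rho^4-c^{12}}$, which is trivially true. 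Thus $g<0$ on all of $(d^*,1)$, and in particular $g(c,d^*)<0$; when $c^6\ge\rho^2$ one has instead $k\to1$ and $g(c,d)\to-R<0$ as $d\to1^-$. For $c>1$ we therefore have $g(c,0^+)>0$ against a negative limiting value at $d^*$, so strict monotonicity on $(0,d^*)$ gives exactly one zero $d_c$ and the tail contributes none; for $c\le1$ we have $g(c,0^+)\le0$ with $g$ strictly decreasing on $(0,d^*)$ and negative on $(d^*,1)$, so $g<0$ throughout $(0,1)$ and no periodic trajectory exists. Finally, the preceding proposition excludes periodic trajectories with $\Delta\ge0$, and by \eqref{condinitcd} the triples $(c,d,e)$ exhaust all initial data with $x_0\ge0$ and $\Delta<0$; together these give the \emph{furthermore} assertion.

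I expect the main obstacle to be exactly the tail $(d^*,1)$ in the regime $c^6<\rho^2$, where $k$ fails to be monotone in $d$ and the difference-of-monotone-functions argument for $g$ collapses. The resolution is the observation that once $R(c,d)>1$ the equation $E/K=R$ cannot hold at all, because $E/K<1$; this converts the delicate behaviour of $g$ on the tail into the single elementary inequality $R(c,d^*)>1$, i.e.\ $\rho^2+c^4>\sqrt{\rho^4-c^{12}}$.
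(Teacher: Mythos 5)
Your proposal is correct, and for the uniqueness half it takes a genuinely different route from the paper. Both arguments share the same reduction (periodicity $\Leftrightarrow$ vanishing of the $e$-independent factor of \eqref{ywfunc2}), the same use of the observation that $\tfrac{\rho^2+c^4}{\sqrt{(\rho^2+c^6)^2-4\rho^2d^2c^6}}\ge 1$ forces negativity (the paper's \eqref{dmonotony}), and essentially the same boundary values at $d\to 0^+$. Where you diverge is uniqueness: the paper shows that $\partial\widetilde{\Psi}/\partial d<0$ at \emph{every} zero, by building the auxiliary function $\Psi_1$ (which uses the derivative identities for $E$ and $K$ to reduce everything to a quadratic in $d^2$ controlled via Viete's formulas), whereas you normalize to $g=E(k)/K(k)-R$, locate the critical point $d^*=\sqrt{\tfrac12+\tfrac{c^6}{2\rho^2}}$ of $k(d)$, get strict monotonicity of $g$ on $(0,d^*)$ as a difference of monotone functions, and dispose of the tail $(d^*,1)$ by the elementary remark that $E/K<1<R$ there (your computations $W(d^*)=\sqrt{\rho^4-c^{12}}$ and $\partial_d(k^2)\propto c^6+\rho^2(1-2d^2)$ both check out). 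Your route is more elementary — no differentiation of elliptic integrals, no root analysis of the quadratic $q$ — and it handles $c\le 1$ and $\rho=0$ uniformly. What the paper's heavier computation buys is precisely the inequality $\partial\widetilde{\Psi}/\partial d(c,d_c)<0$, which is reused in Theorem \ref{thenergyperiodic} to apply the implicit function theorem and prove $d'(c)>0$ and the monotonicity of the energy; with your argument that derivative estimate would still have to be established separately. The only blemish is cosmetic: at the edge case $c^6=\rho^2$ the limit as $d\to 1^-$ has $k\to 1/\sqrt{2}$ and $R\to\infty$ rather than $k\to 1$ with $R$ finite, but the conclusion $g<0$ persists.
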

			
			\begin{proof}

				
				As we already said, in order to get a periodic magnetic trajectory through the identity element we need to determine when the function  $\Psi(c,d,e)$ in Equation \eqref{ywfunc2} vanishes, by Lemma \ref{Propperiodic}. It is immediate to see that we just need to study the roots of the function
				\begin{equation}
					\widetilde{\Psi}(c,d)= E(k) -\left(\frac{\rho^2+c^4}{2\sqrt{(\rho^2+c^6)^2-4\rho^2 d^2 c^6}} +\frac{1}{2}\right) K(k)
				\end{equation}
				for $(c,d)\in(0,\infty)\times(0,1)$.
				
				Since $E(k)< K(k)$ for any $k>0$, if  $\frac{\rho^2+c^4}{\sqrt{(\rho^2+c^6)^2-4\rho^2 d^2 c^6}}\geq 1$, then $\widetilde{\Psi}(c,d)<0$. Notice that the map $\frac{\rho^2+c^4}{\sqrt{(\rho^2+c^6)^2-4\rho^2 d^2 c^6}}$ is increasing as a function of $d\in(0,1)$. 
				
				For a fixed  $c$, assume that there exists $\underline{d}$  such that  $\widetilde{\Psi}(c,\underline{d})=0$. Thus, 
				\begin{equation}\label{dmonotony}
					\frac{\rho^2+c^4}{\sqrt{(\rho^2+c^6)^2-4\rho^2 d^2 c^6}}\geq1\implies \underline{d}< d.
				\end{equation}
				
				For $c\leq 1$ one has that $\frac{\rho^2+c^4}{\sqrt{(\rho^2+c^6)^2-4\rho^2 d^2 c^6}}\geq1$ for any $d$, so $\widetilde{\Psi}(c,d)<0$. 
				
				Fixed $c>1$ and verify that
				$$\lim\limits_{d\to 0}\widetilde{\Psi}(c,d)= \frac{c^4(c^2-1)\pi}{4(\rho^2+c^6)}>0, \qquad 0>\lim\limits_{d\to 1^{-}}\widetilde{\Psi}(c,d)=\left\{\begin{array}{cll}
					- \frac{c^4 (c^2+1)\pi}{4(\rho^2-c^6)} & \text{ if } & c < \sqrt[3]{\rho}\\
					-\infty & \text{ if } & c\geq\sqrt[3]{\rho}
				\end{array}.\right.
				$$
				So, by continuity, there exists some $\underline{d}\in(0,1)$ such that $\widetilde{\Psi}(c,\underline{d})=0$.

				Now, we will see that $\frac{\partial\widetilde{\Psi}}{\partial d}(c,\underline{d})<0$ whenever $(c,\underline{d})$ is a root of $\widetilde{\Psi}$. This will imply that there exists exactly one $\underline{d}$ for each $c>1$. 
				
				Consider the auxiliary function 
				\begin{align*}
					\Psi_1(d)&=\frac{\partial\widetilde{\Psi}}{\partial d}(c,d)- \frac{c^4 (c^6 -2 d^2\rho^2 +\rho^2)(2c^2d^2-c^2+1)}{d(1-d^2)((c^6+\rho^2)^2-4c^6 d^2 \rho^2)}\widetilde{\Psi}(c,d)\\
					&=\frac{c^6(A_1d^4+A_2d^2+A_3)}{4d(1-d^2)((\rho^2+c^6)^2-4\rho^2 d^2 c^6)^{3/2}}K(k)
				\end{align*}     	
				where the coefficients $A_i$ are
				\begin{itemize}
					\item $A_1=8\rho^2(2c^4+\rho^2)$, \item $A_2=-4c^{10}+2c^6\rho^2-16c^4\rho^2+2c^2\rho^2-8\rho^4$ and  
					\item $A_3=-c^2(c^2-1)^2(c^6+\rho^2)$.
				\end{itemize} 
				
				If $\rho=0$, then $A_1=0$, $A_2<0$ and $A_3<0$, so $\Psi_1(d)<0$ for every $d$ and if $\underline{d}>0$ is such that $\Psi (c,\underline{d})=0$ then $\frac{\partial\widetilde{\Psi}}{\partial d}(c,\underline{d})=\Psi_1(\underline{d})<0$
				
				In the case $\rho\neq0$, $A_1>0$ and $A_3<0$ so the quadratic polynomial 
				$$q(x)=A_1d^2+A_2d+A_3$$
				has a positive and a negative root that we denote by $d_1^2$ and $-d_2^2$, respectively, with $d_1,\,d_2>0$. Since the elliptic integral $K(k)$ never vanishes, the real roots of $\Psi_1$ are exactly $d_1$ and $-d_1$. In particular,  $\Psi_1$ has at most one root in the open interval $(0,1)$.
				
				Denote by 
				$\nu:=(\rho^2+c^4)^2-(\rho^2+c^6)^2+4\rho^2d_1^2c^6$, $\zeta:= (\rho^2+c^4)^2-(\rho^2+c^6)^2-4\rho^2d_2^2c^6$.  Using Vieta's relations between the roots of the quadratic $q$ given by
				\begin{equation*}
					d_2^2 - d_1^2=\frac{A_2}{A_1}, \ \ \ \ d_1^2 d_2^2 = -\frac{A_3}{A_1},
				\end{equation*}
				
				we have 
				\begin{equation*}
					\nu \zeta=-\frac{2c^8(c^4-1)(c^4+2\rho^2)(c^4+\rho^2)^2}{2c^4+\rho^2}<0.
				\end{equation*}\normalsize
				Since $\nu > \zeta$, we get that $\nu>0$ which is equivalent to
				$\frac{\rho^2+c^4}{\sqrt{(\rho^2+c^6)^2-4\rho^2 d_1^2 c^6}}>1$.
				So for any $\underline{d}$ such that $\widetilde{\Psi} (c,\underline{d})=0$ we have $0<\underline{d}<d_1$ by Equation  \eqref{dmonotony}. Therefore  $q(\underline{d}^2)<0$ which implies $\Psi_1(\underline{d})<0$ and finally  $\frac{\partial\widetilde{\Psi}}{\partial d}(c,\underline{d})<0$. 
				
				We conclude that for every $c>1$ there exists only one such $\underline{d}$ that we denote by $d_c$ satisfying 
				\begin{itemize}
					\item $0<d_c<1$, 
					\item $\Psi(c,d_c)=0$.
				\end{itemize}  Then  $(c,d_c,e)$ determines a periodic magnetic trajectory for any $c>1$ and $e\in[-1,1]$.  
				
			\end{proof}

			\begin{rem}\label{remposPsi}
				Observe that from the proof we have that if $d<d_c$ then $\Psi(c,d,e)>0$ and if $d>d_c$ then $\Psi(c,d,e)<0$.
			\end{rem}

			From the previous lemma we have that there exist an infinite number of periodic magnetic trajectories with $\Delta<0$ through the identity with $x_0\geq 0$. Now we are going to characterize their energy level.

			\begin{thm}\label{thenergyperiodic}
				Let $F_{e_1,\rho}$ with $\rho\geq0$ denote a Lorentz force on the Heisenberg Lie group. For every energy $\mathcal E>0$ there are an infinite number of periodic magnetic trajectories with energy $\mathcal E$ through the identity with $x_0\geq0$.
				
				 More precisely, there exists exactly one $c>1$ such that the magnetic periodic curves associated to $(c,d_c, e)$ for  $e\in[-1,1]$ (Lemma \ref{unicityofperiodic}),  have energy $\mathcal E$. 
			\end{thm}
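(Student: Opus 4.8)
The plan is to exploit the fact, recorded in Equation \eqref{Energycde}, that the energy of a periodic trajectory is independent of the parameter $e$. By Lemma \ref{unicityofperiodic}, every periodic magnetic trajectory through the identity with $x_0\geq 0$ arises from a triple $(c,d_c,e)$ with $c\in(1,\infty)$ and $e\in[-1,1]$, so its energy is the one-variable function
\[\varepsilon(c):=\En(c,d_c)=\frac{(c^4+\rho^2)\bigl((c^2-1)^2+4c^2d_c^2\bigr)}{2c^4}.\]
Thus the theorem reduces to showing that $\varepsilon\colon(1,\infty)\to(0,\infty)$ is a continuous, strictly increasing bijection. Granting this, each $E>0$ determines a unique $c$ with $\varepsilon(c)=E$, and then the entire family $\{(c,d_c,e):e\in[-1,1]\}$ consists of periodic trajectories of energy $E$. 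These are pairwise distinct: by \eqref{condinitcd} the component $z_0=\tfrac{\rho}{c^2}+2cd_ce-\rho$ is strictly increasing in $e$ (since $c,d_c>0$), so different values of $e$ yield different initial velocities and hence infinitely many trajectories of energy $E$.

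Continuity of $c\mapsto d_c$, and therefore of $\varepsilon$, follows from the implicit function theorem applied to the defining relation $\widetilde\Psi(c,d_c)=0$, because $\tfrac{\partial\widetilde\Psi}{\partial d}(c,d_c)<0$ was shown in the proof of Lemma \ref{unicityofperiodic}. For the boundary behaviour, recall from that proof that $\widetilde\Psi(c,d)<0$ for all $d$ when $c\leq 1$, while $\lim_{d\to0}\widetilde\Psi(c,d)=\frac{c^4(c^2-1)\pi}{4(\rho^2+c^6)}>0$ for $c>1$; letting $c\to1^+$, continuity together with these sign conditions forces $d_c\to0$, and since both $(c^2-1)^2$ and $d_c^2$ then vanish we get $\varepsilon(c)\to0$. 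As $c\to\infty$ the leading term of $\varepsilon(c)$ is $\tfrac12 c^4$, so $\varepsilon(c)\to\infty$. Surjectivity onto $(0,\infty)$ is then immediate from the intermediate value theorem.

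The core of the argument, and the step I expect to be the main obstacle, is the strict monotonicity $\varepsilon'(c)>0$. When $\rho=0$ this is transparent: the modulus degenerates to $k=d$ and the periodicity condition becomes $\tfrac{E(d)}{K(d)}=\tfrac12+\tfrac{1}{2c^2}$; since $E/K$ is strictly decreasing in its modulus, $d_c$ is strictly increasing in $c$, whence $\varepsilon(c)=\tfrac12\bigl((c^2-1)^2+4c^2d_c^2\bigr)$ is a sum of two strictly increasing terms. For $\rho>0$ the factor $\tfrac{c^4+\rho^2}{c^4}$ is decreasing while $(c^2-1)^2+4c^2d_c^2$ is increasing, so no termwise argument is available and one must differentiate along the locus $\widetilde\Psi=0$. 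I would use the chain rule, $\varepsilon'(c)=\partial_c\En+\partial_d\En\cdot d_c'$ with $d_c'=-\partial_c\widetilde\Psi/\partial_d\widetilde\Psi$ and $\partial_d\En=\tfrac{4(c^4+\rho^2)d_c}{c^2}>0$, and then adapt the device from Lemma \ref{unicityofperiodic}: form a suitable combination of the energy partials with $\partial_c\widetilde\Psi$ and $\partial_d\widetilde\Psi$ in which the elliptic integrals $E(k)$ and $K(k)$ collapse to a rational multiple of $K(k)$, and read off the sign of $\varepsilon'(c)$ at the root $d_c$ from that rational factor using $K(k)>0$. The delicate point is controlling the new derivative $\partial_c\widetilde\Psi$, which differentiates $E$ and $K$ through the $c$-dependence of the modulus; this is where the bulk of the computation lies. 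Combining monotonicity with the two boundary limits yields the required bijection and completes the proof.
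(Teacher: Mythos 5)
Your overall strategy is the paper's: reduce to the one-variable function $\En(c)=\En(c,d_c,e)$, establish continuity of $c\mapsto d_c$ by the implicit function theorem using $\tfrac{\partial\widetilde{\Psi}}{\partial d}(c,d_c)<0$, compute the boundary limits, and prove strict monotonicity to get a bijection onto $(0,\infty)$. The reduction, the limits, the distinctness of the trajectories for different $e$, and your clean treatment of $\rho=0$ (via the strict monotonicity of $E/K$) are all fine and match the paper.

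However, there is a genuine gap: the monotonicity of $\En(c)$ for $\rho>0$, which you yourself flag as ``the main obstacle'' and ``where the bulk of the computation lies,'' is not actually carried out --- it is only a plan. This is precisely the hard content of the theorem, so the proposal does not constitute a proof. Moreover, your plan as described is not quite sufficient even in outline. The paper's argument requires \emph{two} separate sign estimates, not one collapsed combination: (a) it shows $\tfrac{\partial\widetilde{\Psi}}{\partial c}(c,d_c)>0$ by forming the auxiliary function $\Psi_2(d)=\tfrac{\partial\widetilde{\Psi}}{\partial c}-\tfrac{3c^3\rho^2(c^2-2c^2d^2-1)}{(c^6+\rho^2)^2-4\rho^2d^2c^6}\widetilde{\Psi}$, in which $E(k)$ cancels and only a rational multiple of $K(k)$ survives, and then bounds $d_c$ using the monotonicity relation \eqref{dmonotony} to pin down the sign of the surviving quadratic $B_1d^2+B_2$; this gives $d'(c)>0$. (b) Separately, in $\En'(c)=\tfrac{2(D_1d_c^2+D_2)}{c^5}+\tfrac{4(c^4+\rho^2)}{c^2}d_cd'(c)$ with $D_1=2c^2(c^4-\rho^2)$, the direct term can have $D_1<0$ when $c^4<\rho^2$, so $d'(c)>0$ alone does not conclude; one needs the additional bound $d_c<\sqrt{-D_2/D_1}$, again extracted from \eqref{dmonotony}. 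Your proposal does not anticipate step (b) at all, and for step (a) does not verify that the candidate multiplier actually cancels $E(k)$ or that the resulting rational factor has a definite sign on the root locus. Until these computations are done, the bijectivity of $\En$ --- and hence the theorem --- is not established.
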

			\begin{proof}
				Remember the expression of the energy in terms of $(c,d,e)$ given in Equation \eqref{Energycde}, which  for periodic magnetic trajectories only depends on $c>1$:
				\begin{equation}\label{energyf}
					\En(c)=\frac{(c^4+\rho^2)(c^4+4c^2 d_c^2-2c^2+1)}{2c^4}
				\end{equation}	
				where $d(c)=d_c$ is the function defined by the condition $\widetilde{\Psi}(c,d_c)=0$ for every $c>1$. And it was proved in the previous lemma that $\frac{\partial\widetilde{\Psi}}{\partial d}(c,d_c)<0$. By the Implicit Function Theorem $d(c)$  is a $C^1$-function on $(1,\infty)$ and $d'(c) =- \frac{\partial\widetilde{\Psi}}{\partial c}(c,d_c) / \frac{\partial\widetilde{\Psi}}{\partial d}(c,d_c)$.  
				
				Let see that $\frac{\partial\widetilde{\Psi}}{\partial c}(c,d_c)>0$ for every $c>1$. As before we use a convenient auxiliary function given by:
				\begin{align*}
					\Psi_2(d)&=\frac{\partial\widetilde{\Psi}}{\partial c}(c,d)- \frac{3c^3\rho^2(c^2-2c^2d^2-1)}{(c^6+\rho^2)^2-4\rho^2d^2c^6}\Psi(c,d)=\frac{c^3(B_1d^2+B_2)}{((\rho^2+c^6)^2-4\rho^2 d^2 c^6)^{3/2}}K(k)
				\end{align*}     
				where the coefficients $B_i$ are
				\begin{itemize}
					\item $B_1=-2c^2\rho^2(2c^4+3\rho^2)$ and 
					\item $B_2=c^{12} + (\frac{3}{2}c^8 + 2c^6 - \frac{3}{2}c^4)\rho^2 + (3c^2-2)\rho^4$.
				\end{itemize} 
				
				If $\rho=0$ then $\Psi_2(d)=\frac{K(k)}{c^3}>0$ for any $d\in(0,1)$.	
				
				If $\rho\neq 0$, then observe that $B_1<0$ and $B_2>0$ since $c>1$, which implies  $-\frac{B_2}{B_1}>0$.

				An application of Equation  \eqref{dmonotony} for $d=\sqrt{-\frac{B_2}{B_1}}$ gives 
				\begin{equation*}
					\frac{\rho^2+c^4}{\sqrt{(\rho^2+c^6)^2+4\rho^2 \frac{B_2}{B_1} c^6}}=\sqrt{\frac{2c^4+3\rho^2}{3\rho^2}}>1\implies d_c< \sqrt{-\frac{B_2}{B_1}}\implies B_1 d_c^2+B_2 >0.
				\end{equation*}
				In any case, we conclude that $\frac{\partial\widetilde{\Psi}}{\partial c}(c,d_c) =\Psi_2(d_c)>0$. Since we have proved in the previous lemma that $\frac{\partial\widetilde{\Psi}}{\partial d}(c,d_c)<0$, we have that $d'(c) >0$ for every $c>1$.
				
				To analyze the behavior  of the function $\En(c)$ we compute
				\begin{equation}\label{derivenergy}
					\En'(c)=\frac{2(D_1 d_c^2 + D_2)}{c^5}+\frac{4(c^4+\rho^2)}{c^2}d_c d'(c)
				\end{equation}
				where $D_1=2c^2(c^4-\rho^2)$ and $D_2=(c^2-1)(c^6+\rho^2)>0$. 
				
				Suppose $D_1<-D_2 <0$, in particular $c^4<\rho^2$. We can apply again  Equation \eqref{dmonotony} for $d=\sqrt{-\frac{D_2}{D_1}} <1$ to get
				\begin{equation*}
					\frac{\rho^2+c^4}{\sqrt{(\rho^2+c^6)^2+4\rho^2 \frac{D_2}{D_1} c^6}}=\sqrt{\frac{\rho^4-c^8}{\rho^4-c^{12}}}>1\implies d_c< \sqrt{-\frac{D_2}{D_1}}\implies D_1 d_c^2+D_2 >0.
				\end{equation*}
				
				If $-D_2\leq D_1<0$ we also have that $D_1 d_c^2+D_2 >0$ since $d_c^2<1$ and the case $D_1\geq0$ is trivial.   
				
				Thus, since $d'(c)>0$ and $D_1 d_c^2+D_2 >0$ we conclude from Equation \eqref{derivenergy} that $\En'(c)>0$ for all $c>1$. Checking that $lim_{c\to1}\En(c)=0$ and $lim_{c\to\infty}\En(c)=\infty$, we can affirm that the  energy function $\En$ gives a bijection between $(1,\infty)$ and $(0,\infty)$. Then for any $\mathcal E>0$ there exists a unique $c>1$ such that  the periodic curves associated to $(c,d_c,e)$ for $e\in[-1,1]$ have energy $\mathcal E$.
						
			\end{proof}		
			
						\begin{example} In particular, we can analyze the case $\rho=0$ with more detail. In this instance we have that $r_4 = - r_1 = \sqrt{z_0^2 + 2\Vert V_0+e_2\Vert - 2(y_0+1)}$, 
				$\delta_1=\delta_4= 2\sqrt{\Vert V_0+e_2\Vert}$. Then the variables $c,d,e$ are given by $c=\sqrt{\Vert V_0+e_2\Vert}$, $d = \frac{\sqrt{z_0^2 + 2\Vert V_0+e_2\Vert - 2(y_0+1)}}{2\sqrt{\Vert V_0+e_2\Vert}}$, $e=\frac{z_0}{\sqrt{z_0^2 + 2\Vert V_0+e_2\Vert - 2(y_0+1)}}$. 
				
				So if $\Vert V_0+e_2\Vert\leq 1$ then the magnetic trajectory is not periodic. On the other hand, since we have that
				$$y(\omega)= 8c\left(E(d) -\left(\frac{1}{2c^2} +\frac{1}{2}\right) K(d)\right)$$
				then  $d_c$ is defined implicitly by $\frac{E(d_c)}{K(d_c)}=\frac{1}{2c^2} +\frac{1}{2}$ for each $\sqrt{\Vert V_0+e_2\Vert}=c > 1$. Then for any $ c> 1$ we have a family of magnetic periodic curves with energy $\En= \frac{c^4+4c^2 d_c^2-2c^2+1}{2}$ and $\sqrt{\Vert V_0+e_2\Vert}=c$.
			\end{example}
			
			\begin{rem} In view of the preceding results, one has the following statement. 
				On the Heisenberg group $(H_3, \la\,,\,\ra)$ equipped with the left-invariant metric and a left-invariant Lorentz force $F$:
				\begin{enumerate}
					\item For $F\equiv F_{0,\rho}$ left-invariant exact, there exist periodic magnetic trajectories with energy $\mathcal E$ if and only  $\mathcal E<\frac{\rho^2}{2}$.
					\item For $F\equiv F_{e_1,\rho}$ with $\rho\geq0$, there exist periodic magnetic trajectories with any energy $\En=\mathcal E>0$.
				\end{enumerate}
				
			\end{rem}

			Until now we consider the action of the symmetry group $H_F\subseteq \Iso(H_3)\times \RR$ to study the solutions of the corresponding magnetic equation. But  there exists an  action of $\RR$ on the set of differentiable curves, $\mathcal C$, given by translations of the parameter 
			\begin{center} $(s\cdot\gamma)(t)=\gamma(t+s)$ \quad for $s\in\RR$.  \end{center}
			Observe that this action preserves the solutions in any autonomous differential system, becoming a symmetry of the differential equations. 
			
			Consider the action of $H_F \ltimes \RR$ on $\mathcal C$ that preserves the magnetic curves for the Lorentz force $F$ given by:
			\begin{equation*}
				((g,s)\cdot \gamma)(t)=(g\cdot\gamma)(t+s), \quad \mbox{for } g \in H_F,  s\in \RR, \gamma \in \mathcal C. 
			\end{equation*} 
			Recall that $g\in H_F$ corresponds to a pair $(\psi,r)\in H_F\subseteq \Iso(H_3)\times \RR$ as  in Lemma \ref{lem2}. 
			
			From Remark \ref{presenergy} this action also preserves the energy of the magnetic trajectories. Thus the transformed of a periodic magnetic trajectory is also a periodic magnetic trajectory with the same energy. Next theorem proves a converse of this fact for any left-invariant Lorentz force in $H_3$.

			\begin{thm} \label{Equienergy}
				Given a left-invariant Lorentz force $F$ in the Heisenberg group $H_3$, any two periodic magnetic trajectories with the same energy are equivalent by the action of the product group $H_F\ltimes\RR$.
			\end{thm}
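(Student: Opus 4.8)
The plan is to produce, for a fixed left-invariant $F$ and two periodic trajectories $\sigma_1,\sigma_2$ of equal energy $E$, an explicit element of $H_F\ltimes\RR$ carrying $\sigma_1$ to $\sigma_2$. Since the action of $\Iso(H_3)\times\RR^*$ conjugates a general $F$ to one of the representatives $F_{e_1,\rho}$ ($\rho\geq0$) or $F_{0,1}$ of Proposition \ref{orbits}, sends periodic trajectories to periodic trajectories, scales all energies by the common factor $r^2$, and carries $H_F\ltimes\RR$ isomorphically onto the isotropy group of the representative, it suffices to treat these two representatives and transport the equivalence back by conjugation. Throughout I use that left translations lie in $H_F$ for any left-invariant $F$, so applying $L_{\sigma(0)^{-1}}$ lets me assume every periodic trajectory passes through the identity, and that $(S,-1)\in H_F$ lets me normalize $x_0\geq0$; both operations preserve periodicity and, by Remark \ref{presenergy}, energy.

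For the main case $F=F_{e_1,\rho}$, I would show that time translation, re-based to the identity, acts transitively on the remaining parameter $e$. Given a periodic trajectory $\gamma$ with parameters $(c,d_c,e_0)$ as in Lemma \ref{unicityofperiodic}, set $\gamma_t=\bigl((L_{\gamma(t)^{-1}},1),t\bigr)\cdot\gamma$; this is an element of $H_F\ltimes\RR$ applied to $\gamma$ and is again a periodic magnetic trajectory through the identity with energy $E$. By Theorem \ref{thenergyperiodic} its value of $c$ is the unique one determined by $E$, and by Lemma \ref{unicityofperiodic} its value of $d$ is $d_c$; hence $\gamma_t$ has parameters $(c,d_c,\tilde e(t))$. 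The key computation is that the $\zz$-component of the left-invariant velocity of $\gamma$ equals $(x(t)+z_0)e_3$, obtained by integrating the second equation of \eqref{magnetic-type2HeisSimp} with $U=e_1$; thus the re-based initial data satisfy $\tilde x_0=x'(t)$ and $\tilde z_0+\rho=x(t)+z_0+\rho$, whence $\tilde e(t)=\frac{2}{r_4-r_1}\bigl(x(t)+z_0+\rho-\frac{r_1+r_4}{2}\bigr)$.

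Restricting $t$ to a half-period on which $x$ increases monotonically from its minimum to its maximum (the elliptic solution \eqref{solcase1} is a monotone function of $\mathrm{cn}$ on each half-period), one has $\tilde x_0=x'(t)\geq0$ while $x(t)+z_0+\rho$ sweeps $[r_1,r_4]$ bijectively, so $\tilde e(t)$ runs once over $[-1,1]$. Therefore, for any target $e_1$ there is a unique $t^\ast$ with $\tilde e(t^\ast)=e_1$; matching $(c,d_c,e_1)$ pins down the initial data \eqref{condinitcd}, and uniqueness of solutions (Cauchy--Kovalevskaya) identifies $\gamma_{t^\ast}$ with the target trajectory. This yields transitivity on the energy-$E$ family and settles this case.

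For the exact representative $F_{0,1}$ periodic trajectories exist only for $0<E<\frac{\rho^2}{2}$ by Theorem \ref{periodicexact}, and there the argument is simpler: the analysis of Example \ref{exacsolutions} shows periodicity forces $z_0$ to equal the uniquely determined value $-\rho+\sgn(\rho)\sqrt{\rho^2-2E}$, while $(x_0,y_0)$ lies on the fixed circle $x_0^2+y_0^2=2E-z_0^2$; since $H_{F_{0,1}}$ contains $\mathrm{SO}(2)$ acting by rotations on $\vv$ and fixing $\zz$, these rotations act transitively on the circle, giving the equivalence. I expect the main obstacle to be the case $F_{e_1,\rho}$: verifying cleanly that time translation moves exactly the phase $e$ while leaving $c$ and $d_c$ fixed, i.e.\ recognizing energy and period as the complete set of time-translation invariants and pinning down the conserved $\zz$-velocity. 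Once this is in hand, the monotonicity of $x$ over a half-period delivers transitivity immediately.
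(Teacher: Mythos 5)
Your proposal is correct, and its skeleton coincides with the paper's: reduce to the representatives of Proposition \ref{orbits}, use left translations and $(S,-1)$ to normalize to trajectories through the identity with $x_0\geq 0$, dispatch the exact case with an $\mathrm{SO}(2)$ rotation via Theorem \ref{periodicexact}, and for $F_{e_1,\rho}$ exhibit a combined time shift and left translation. In fact you produce the \emph{same} group element as the paper: your condition $\tilde e(t^{\ast})=e_2$ unwinds, via $\tilde z_0=x_1(t^{\ast})+z_0$, to $x_1(t^{\ast})=\tilde z_0-z_0$, which is exactly the paper's defining equation for the shift $C$, and the resulting element is $\bigl((L_{\sigma_1(C)^{-1}},1),C\bigr)$ in both arguments. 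Where you genuinely diverge is in the verification. The paper substitutes into the explicit formulas \eqref{solcase1}, \eqref{eqmagneticHeisy}, \eqref{eqmagneticHeisz} and checks by direct computation that $x_2(t)=x_1(t+C)+z_0-\tilde z_0$, $y_2(t)=y_1(t+C)-y_1(C)$ and the corresponding identity for $z_2$, concluding $\sigma_2(t)=\sigma_1(C)^{-1}\sigma_1(t+C)$. You instead identify the conserved central component $(x(t)+z_0)e_3$ of the left-invariant velocity, observe that re-basing fixes $(c,d_c)$ (by Theorem \ref{thenergyperiodic} and Lemma \ref{unicityofperiodic}) and moves only the phase $e$, show by monotonicity of $x$ over a half-period that $\tilde e$ sweeps $[-1,1]$, and close with the bijectivity of \eqref{condinitcd} plus uniqueness of solutions. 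Your route trades the paper's integral manipulations for a heavier reliance on the $(c,d,e)$ parametrization and the uniqueness results of Section \ref{section5}; it is cleaner and makes the role of the conserved quantity explicit, at the cost of being less self-contained. The only points worth tightening are the transport-of-structure step (conjugation carries $H_F\ltimes\RR$ to the isotropy of the representative, with the reparametrization $t\mapsto rt$ rescaling the $\RR$ factor) and the implicit use of the fact that all periodic trajectories for $F_{e_1,\rho}$ have $\Delta<0$, so that the $(c,d,e)$ coordinates actually apply; both are available in the paper and neither is a gap.
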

			
			\begin{proof}
				First consider $F=F_{0,\rho}$. From Theorem \ref{periodicexact},  we have that two periodic magnetic trajectories through the identity with the same 
				energy $\mathcal E$ have initial conditions $(x_0,y_0,z_0)$ and $(\tilde{x}_0,\tilde{y}_0,z_0)$ with $x_0^2+y_0^2=\tilde{x}_0^2+\tilde{y}_0^2 = 2\mathcal{E}-z_0^2$ and $z_0=-\rho+\sgn(\rho) \sqrt{\rho^2-2\mathcal{E}}$. Then, there exist $A\in SO(2)$ such that $A(x_0, y_0)=(\tilde{x}_0, \tilde{y}_0)$ and the corresponding $((A,1),0)\in H_F\ltimes\RR$ transforms one trajectory to the other. Furthermore, if the periodic trajectories have initial condition different from the identity we apply a translation by an element of $H_3$ which is also in $H_F\ltimes\RR$.

				By the observation following Proposition \ref{closedperiodic} about the action on Lorentz forces, we only have to consider $F=F_{e_1,\rho}$ with $\rho\geq0$. Let $\sigma_1(t)=\exp(x_1(t)e_1+y_1(t)e_2+z_1(t)e_3)$ and $\sigma_2=\exp(x_2(t)e_1+y_2(t)e_2+z_2(t)e_3)$ be two periodic curves with the same energy through the identity such that $x_1'(0)=x_0\geq 0$ and $x_2'(0)=\tilde{x}_0\geq 0$. By Theorem \ref{thenergyperiodic} they are associated to $(c,d_c,\tilde{e}_1)$ and $(c,d_c,\tilde{e}_2)$, respectively, since they have the same energy. Denote the corresponding initial conditions given in \eqref{condinitcd} by $(x_0,y_0,z_0)$ and $(\tilde{x}_0,\tilde{y}_0,\tilde{z}_0)$. 
				Since the relations \eqref{rootscde} do not depend on the coordinate $e$, the two solutions have the same associated roots i.e. determine the same polynomial $P(\eta)$ in \eqref{polinQ}. So by remembering the explicit form of a magnetic trajectory from Equation \eqref{solcase1} we get that 
				\begin{equation}\label{relx1x2}
					x_2(t)=x_1(t+C)+z_0-\tilde{z}_0
				\end{equation}
				for all $t$, where  $C$ is a constant such that $x_1(C)=\tilde{z}_0-z_0$. Then, from Equation \eqref{eqmagneticHeisy}  one has 
				\begin{align*}
					y'_2(t)&=\frac{x_2(t)^2}{2}+(\tilde{z_0}+\rho)x_2(t)+\tilde{y_0}\\
					&=\frac{(x_1(t+C)+z_0-\tilde{z}_0)^2}{2}+(\tilde{z_0}+\rho)(x_1(t+C)+z_0-\tilde{z}_0)+\tilde{y_0}\\
					&=y'_1(t+C)+(\frac{z_0^2}{2}+\rho z_0-y_0)-(\frac{\tilde{z}_0^2}{2}+\rho \tilde{z}_0-\tilde{y}_0)
				\end{align*}
				But in terms of the variables $(c,d_c,e)$ (see \eqref{condinitcd}) we have that\\ $\frac{z_0^2}{2}+\rho z_0-y_0 = \frac{(4d^2 - 2)c^6 + (-\rho^2 + 2)c^4 + \rho^2}{2c^4}$ does not depends on $e$, so $y'_2(t)=y_1'(t+C)$. Therefore 	for every $t$ we get
				\begin{equation}\label{rely1y2}
					y_2(t)=y_1(t+C)-y_1(C).
				\end{equation}

				Now, use Equation \eqref{eqmagneticHeisz} to get:
				\begin{align*}
					z_2(t) = & -\frac{1}{2}x_2(t)y_2(t)-(\tilde{z}_0+\rho)y_2(t)-x_2'(t)+\tilde{x}_0\\
					=&-\frac{1}{2}(x_1(t+C)-x_1(C))(y_1(t+C)-y_1(C))\\
					&-(\tilde{z}_0+\rho)(y_1(t+C)-y_1(C))-x_1'(t+C)+\tilde{x}_0\\
					=& z_1(t+C)-z_1(C)+(z_0-\tilde{z}_0)(y_1(t+C)-y_1(C))-x_1(C)y_1(C)\\&+\frac{1}{2}x_1(t+C)y_1(C)+\frac{1}{2}x_1(C)y_1(t+C)-x'_1(C)+\tilde{x}_0.
				\end{align*}
				From Equation \eqref{relx1x2} we have that $x_1(C)=\tilde{z}_0-z_0$ and $\tilde{x}_0=x'_2(0)=x'_1(C)$, then
				\begin{equation}\label{relz1z2}
					z_2(t)=z_1(t+C)-z_1(C)+\frac{1}{2}x_1(t+C)y_1(C)-\frac{1}{2}x_1(C)y_1(t+C).
				\end{equation}	 
				
				From Equations \eqref{relx1x2}, \eqref{rely1y2} and \eqref{relz1z2} we obtain that:
				\begin{equation}
					\sigma_2(t)=\sigma_1(C)^{-1}\sigma_1(t+C)
				\end{equation}  
				for all $t$.
				We conclude that $\sigma_2=((L_{\sigma_1(C)^{-1}},1),C)\cdot\sigma_1$ with $((L_{\sigma_1(C)^{-1}},1),C)\in H_F\ltimes \RR$.
				
			\end{proof}
			
			

			\section{Closed magnetic trajectories on compact nilmanifolds $\Lambda\backslash H_3$}\label{section6}
			
			The aim of this section is to study closed magnetic trajectories on compact  Heisenberg nilmanifolds, that is $M=\Lambda\backslash H_3$, where $\Lambda$ denotes a lattice in $H_3$, i.e., a discrete cocompact subgroup.  
			
			More generally, let $(N, \la\,,\ra)$ denote any Lie group endowed with a left-invariant metric and $\Lambda$ a lattice in $N$. The compact quotient $\Lambda \backslash N$ naturally inherits a Riemannian metric from $N$. Indeed, this is well defined because the metric is left-invariant, i.e., $\la \cdot, \cdot \ra_{np} =\la \cdot, \cdot \ra_p$ for all $n,p\in N$. 
			
			Moreover, any left-invariant magnetic field on $N$ induces a magnetic field on $\Lambda\backslash N$, and the same holds for the associated Lorentz forces. We will refer to these induced magnetic fields and Lorentz forces as left-invariant, identifying them with their corresponding lifts to $N$.    
			
			\begin{defn} \label{periodic}
				Let $N$ be a Lie group.  For any element  $\lambda \in N$ different from the identity, a curve  $\sigma(t)$ is called {\em $\lambda$-periodic} with period $\omega\neq 0$ if, for all $t \in \mathbb{R}$, it holds:
				\begin{equation}\label{per}
					\lambda \sigma(t)=\sigma(t+\omega).
				\end{equation}
				
			\end{defn}
			
			The following result shows properties of this notion. 
			
				\begin{prop}\label{lambperiodnyc}
				Let $\sigma$ denote a $\lambda$-periodic curve with period $\omega$ on a Lie group $N$ and fix $p\in N$. Then:
				\begin{enumerate}
					\item $\sigma$ is a $\lambda^m$-periodic curve with period $m\omega$ for any $m\in\ZZ-\{0\}$.
					\item $p\sigma$ is a $p\lambda p^{-1}$-periodic curve with period $\omega$.
				\end{enumerate}
			\end{prop}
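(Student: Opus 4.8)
The plan is to treat the two assertions separately, each following directly from the defining relation $\lambda\sigma(t)=\sigma(t+\omega)$ together with the left-invariance of the data.

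For (1), I would argue by induction on $m\geq 1$. The base case $m=1$ is the hypothesis. Assuming $\lambda^{m}\sigma(t)=\sigma(t+m\omega)$ for all $t$, one computes $\lambda^{m+1}\sigma(t)=\lambda\bigl(\lambda^{m}\sigma(t)\bigr)=\lambda\,\sigma(t+m\omega)=\sigma(t+m\omega+\omega)=\sigma\bigl(t+(m+1)\omega\bigr)$, closing the induction. For negative exponents the key is to read the defining relation backwards: substituting $t\mapsto t-\omega$ gives $\lambda\,\sigma(t-\omega)=\sigma(t)$, hence $\lambda^{-1}\sigma(t)=\sigma(t-\omega)$, and the same induction then produces $\lambda^{m}\sigma(t)=\sigma(t+m\omega)$ for all $m\in\ZZ-\{0\}$. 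Since $\sigma$ is unchanged as a curve it is of course still a magnetic trajectory, and $m\omega\neq 0$ because $m\neq 0$ and $\omega\neq 0$, so $\sigma$ is $\lambda^{m}$-periodic with period $m\omega$.

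For (2), the essential point is that the left translation $L_p$ is a symmetry of the magnetic equation. Since the metric is left-invariant one has $L_p\in\Iso(N)$, and since the Lorentz force $F$ is left-invariant one has $L_p\cdot F=F$ in the notation of Lemma \ref{lem2}. Therefore $(L_p,1)$ lies in the isotropy subgroup $H_F$, and Lemma \ref{lem2}(i) guarantees that $(L_p,1)\cdot\sigma=L_p\circ\sigma=p\sigma$ is again a magnetic trajectory for the same Lorentz force $F$. It then remains only to verify the twisted periodicity, which is a one-line computation: for every $t$,
\[
(p\lambda p^{-1})\,(p\sigma)(t)=p\lambda\,\sigma(t)=p\,\sigma(t+\omega)=(p\sigma)(t+\omega),
\]
so $p\sigma$ is $p\lambda p^{-1}$-periodic with the unchanged period $\omega$.

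There is no serious obstacle here; the proposition is elementary once the two observations above are in place. The only point requiring a little care is that, a priori, one must know that left-translating a magnetic trajectory again yields a magnetic trajectory for the \emph{same} $F$ — this is exactly where the left-invariance of both the metric and the Lorentz force enters, through Lemma \ref{lem2}. Everything else is a direct manipulation of the group relation $\lambda\sigma(t)=\sigma(t+\omega)$ and an induction on $m$.
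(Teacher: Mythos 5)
Your proof is correct and follows essentially the same route as the paper: part (1) is a direct manipulation of the relation $\lambda\sigma(t)=\sigma(t+\omega)$, and part (2) rests on the identity $p\lambda p^{-1}p\sigma(t)=p\lambda\sigma(t)=p\sigma(t+\omega)$, which is exactly the computation the paper gives. Your additional remarks (the induction for general $m$ and the appeal to Lemma \ref{lem2} to confirm that $p\sigma$ is still a magnetic trajectory for the same left-invariant $F$) only make explicit what the paper leaves implicit.
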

			
			The first part is immediate from Equation \eqref{per} and the second holds from the next relation
			$$p\lambda p^{-1} p\sigma(t)=p\lambda \sigma(t)=p\sigma(t+\omega)\quad \mbox{ for every } t.$$

			It is clear that whenever the subgroup  $\Lambda<N$ is a lattice,  for any element  $\lambda \in \Lambda$, a $\lambda$-periodic magnetic trajectory projects to a smoothly closed magnetic trajectory under the mapping $N \rightarrow \Lambda \backslash N$ and will be contained in the free homotopy class corresponding to $\lambda$ (restricting the domain of the curve to a loop).  Conversely, every closed magnetic trajectory $\sigma$ on $\Lambda \backslash N$ lifts to a $\lambda$-periodic magnetic trajectory on the Lie group $N$, if $\sigma$ is non-contractible, or directly lifts to a closed magnetic trajectory on $N$, if $\sigma$ is contractible.

			Next we  study closed magnetic geodesics on compact quotients for which we need the notion above. Let $N$ be a simply connected $2$-step nilpotent Lie group equipped with a left-invariant metric and let $\nn$ denote its Lie algebra.  The metric on $\nn$  induces an orthogonal decomposition as direct sum of vector spaces:
			\begin{equation}\label{decomp}
				\nn = \vv \oplus \zz, \qquad \mbox{ with } \vv=\zz^{\perp}, 
			\end{equation}
			where $\zz$ denotes the center of $\nn$. Choose an element $\lambda\in N$, where  $\lambda=\exp(W_1+Z_1)$ with $W_1\in \vv, Z_1\in \zz$. 
			For $V(t)$ and $Z(t)$ curves on $\vv$ and $\zz$ respectively, a  trajectory $\sigma(t)=\exp(V(t)+Z(t))$ passing through the identity is $\lambda$-periodic with period $\omega$, for  $\omega\neq 0$, if and only if the following equations are verified 	for all $t\in\RR$:
			\begin{equation}\label{gamma-periodic2step}
				\left\{ \begin{array}{rcl}
					W_1+V(t) & = & V(t+\omega)\\
					Z_1+Z(t)+\frac{1}{2}[W_1,V(t)] & = & Z(t+\omega).
				\end{array} \right..
			\end{equation}	
		 In fact, this is possible since the exponential map is a diffeomorphism. 
			
			Let $p_{\vv}:\nn \to \vv$ and $p_{\zz}:\nn \to \zz$ denote the orthogonal projections to $\vv$ and $\zz$ respectively with respect to the decomposition \eqref{decomp}. Denote by $F_{\vv}$ and $F_{\zz}$ the following linear maps:
			$$F_{\vv} := p_{\vv} \circ F \qquad \qquad  F_{\zz}:= p_{\zz} \circ F.$$

			\begin{lem} Let $(N, \la\,,\,\ra)$ be a simply connected 2-step Lie nilpotent Lie group equipped with a left-invariant metric. Then a trajectory $\sigma=\exp(V(t)+Z(t))$ on $N$ is $\lambda=\exp(W_1+Z_1)$-periodic, for $W_1\in \vv, Z_1\in \zz$, if and only if Equations \eqref{gamma-periodic2step} hold. 
			\end{lem}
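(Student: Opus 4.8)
The plan is to translate the defining identity $\lambda\sigma(t)=\sigma(t+\omega)$ into exponential coordinates and then to compare the $\vv$- and $\zz$-components on both sides. First I would use that $N$ is $2$-step nilpotent, so the Baker--Campbell--Hausdorff series truncates: for all $X,Y\in\nn$ one has $\exp(X)\exp(Y)=\exp\!\big(X+Y+\tfrac12[X,Y]\big)$, since every iterated bracket of length $\geq 3$ vanishes. Applying this to $\lambda=\exp(W_1+Z_1)$ and $\sigma(t)=\exp(V(t)+Z(t))$ yields
\begin{equation*}
\lambda\sigma(t)=\exp\!\Big( W_1+Z_1+V(t)+Z(t)+\tfrac12\big[W_1+Z_1,\,V(t)+Z(t)\big]\Big).
\end{equation*}

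Next I would simplify the bracket term. Since $Z_1\in\zz$ and $Z(t)\in\zz$ lie in the center, we have $[Z_1,\cdot]=0$ and $[W_1,Z(t)]=0$, so the only surviving contribution is $\tfrac12[W_1,V(t)]$, which moreover belongs to $\nn'\subseteq\zz$ because $N$ is $2$-step. Hence the argument of the exponential splits along the decomposition $\nn=\vv\oplus\zz$ of Equation \eqref{decomp} as
\begin{equation*}
\lambda\sigma(t)=\exp\!\Big(\underbrace{\big(W_1+V(t)\big)}_{\in\vv}+\underbrace{\big(Z_1+Z(t)+\tfrac12[W_1,V(t)]\big)}_{\in\zz}\Big),
\end{equation*}
while $\sigma(t+\omega)=\exp\big(V(t+\omega)+Z(t+\omega)\big)$ with $V(t+\omega)\in\vv$ and $Z(t+\omega)\in\zz$.

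Finally, since $N$ is simply connected and nilpotent the exponential map $\exp:\nn\to N$ is a diffeomorphism, so $\lambda\sigma(t)=\sigma(t+\omega)$ holds for all $t$ if and only if the two Lie-algebra arguments coincide for all $t$. Because the sum $\nn=\vv\oplus\zz$ is direct, this vector equality is equivalent to the simultaneous equality of the $\vv$-parts and of the $\zz$-parts, which are precisely the two lines of Equations \eqref{gamma-periodic2step}. I do not expect any genuine obstacle here: the only points requiring care are the truncation of the BCH formula and the observation that $[W_1,V(t)]\in\zz$, both immediate from $2$-step nilpotency, after which the conclusion follows from the injectivity of $\exp$ together with the directness of the decomposition $\vv\oplus\zz$.
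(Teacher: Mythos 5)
Your proof is correct and follows essentially the same route as the paper: apply the truncated Baker--Campbell--Hausdorff product formula for the $2$-step group, observe that the only surviving bracket $\tfrac12[W_1,V(t)]$ lies in the center, and then use that $\exp$ is a diffeomorphism together with the directness of $\nn=\vv\oplus\zz$ to equate the $\vv$- and $\zz$-components, which yields exactly Equations \eqref{gamma-periodic2step}. The paper condenses this to the remark that ``this is possible since the exponential map is a diffeomorphism,'' so your write-up simply makes the same argument explicit.
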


			In \cite{OS} it was proved that a magnetic trajectory on the 2-step nilpotent Lie group $N$ of the form $\sigma(t)=\exp(V(t)+Z(t))$, passing through the identity, satisfies the equations
			\begin{equation}\label{magnetic-2step}
				\left\{ \begin{array}{rcl}
					V''-j(Z'+\frac12 [V', V])V' & = & q F_{\vv}(V'+ Z'+ \frac12 
					[V',V])\\
					Z''+\frac12 [V'',V] & = & qF_{\zz}(V'+ Z'+ \frac12 
					[V',V]).
				\end{array} \right.
			\end{equation}
			
			Assume now that  $\dim(\zz/C(\nn))\leq 1$. In this situation the closedness condition of the magnetic field implies that  $F_{\zz}(\zz)=0$. Therefore if the curve $\sigma(t)=\exp(V(t)+Z(t))$ is a magnetic trajectory for a left-invariant Lorentz force $F$ on a 2-step nilpotent Lie group $(N, \la\,,\,\ra)$, then the second equation of \eqref{magnetic-2step} follows
			$$	Z''+\frac12 [V'',V] = qF_{\zz}(V'), $$
			which is equivalent to 
			\begin{equation}\label{equ-zmagnetic}
				Z'(t)+\frac12 [V'(t),V(t)]  = qF_{\zz}(V(t))+Z_0,
			\end{equation}
			for every $t$ and where $Z'(0)=Z_0$ is the initial velocity in the center of the magnetic trajectory. 
			By evaluating this  equation in $t+\omega$ and by using Equation \eqref{gamma-periodic2step} we get	
			\begin{equation*}\label{Zmagneticeq2}
				Z'(t)+\frac12 [W_1,V'(t)] + \frac12 [V'(t),W_1 +V(t)]  = qF_{\zz}(W_1+ V(t))+Z_0.
			\end{equation*}	
			By comparing the last two equations, one  obtains the following condition for $W_1$:
			\begin{equation}\label{Zmagnetic2}
				F_{\zz}(W_1)=0
			\end{equation}
			
			The converse of this reasoning is true as proved below. 
			
			\begin{lem} \label{lambdaper} Let $(N, \la \,,\, \ra)$ be a simply connected  2-step nilpotent Lie group such that $\dim (\zz/C(\nn))\leq 1$. Let $F$ be a left-invariant Lorentz force on $N$ with magnetic trajectory $\sigma(t)=\exp(V(t)+Z(t))$  through the identity element at $t=0$. Denote by $\lambda=\exp(W_1+Z_1)$  an element in $N$, with $W_1\in \vv, Z_1\in \zz$.
				
				\begin{enumerate}[(i)]
					\item  If $\sigma$ is $\lambda$-periodic of period $\omega$, then 
					$$W_1\in \ker F_{\zz}\quad \mbox{ and } \quad V' \mbox{ is periodic of period }\omega.$$
					\item Conversely, assume $V'(t)$ is periodic of period $\omega$ and $W_1:=V(\omega)$ belongs to the kernel of $F_{\zz}$. Then the magnetic curve $\sigma$ is $\lambda$-periodic for $\lambda=\exp(W_1+Z_1)$ where $Z_1:=Z(\omega)$.
				\end{enumerate}
			\end{lem}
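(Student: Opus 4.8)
The plan is to reduce both implications to the integrated form of the central component of the magnetic equation, namely Equation \eqref{equ-zmagnetic}, which already incorporates the hypothesis $\dim(\zz/C(\nn))\leq 1$ through the vanishing $F_\zz(\zz)=0$. Throughout I will use the characterization of $\lambda$-periodicity furnished by the preceding lemma, i.e. the equivalence of $\lambda$-periodicity with Equations \eqref{gamma-periodic2step} (valid because $\exp$ is a diffeomorphism), together with the initial conditions $V(0)=0$, $Z(0)=0$ and $Z'(0)=Z_0$.

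For part (i), assume $\sigma$ is $\lambda$-periodic of period $\omega$. Differentiating the first equation of \eqref{gamma-periodic2step} immediately gives $V'(t)=V'(t+\omega)$, so $V'$ is periodic of period $\omega$. To obtain $W_1\in\ker F_\zz$, I evaluate \eqref{equ-zmagnetic} at $t+\omega$ and substitute $V(t+\omega)=W_1+V(t)$ together with $Z'(t+\omega)=Z'(t)+\tfrac12[W_1,V'(t)]$, the latter obtained by differentiating the second equation of \eqref{gamma-periodic2step}. On the left-hand side the bracket contributions telescope via $[W_1,V']+[V',W_1]=0$, leaving exactly $Z'(t)+\tfrac12[V'(t),V(t)]$, while the right-hand side becomes $qF_\zz(W_1)+qF_\zz(V(t))+Z_0$. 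Comparing with \eqref{equ-zmagnetic} itself forces $qF_\zz(W_1)=0$, which is precisely \eqref{Zmagnetic2}; this is the computation already sketched in the paragraphs preceding the statement.

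For the converse (ii), the key is to reconstruct both equations of \eqref{gamma-periodic2step} from the two hypotheses by a pair of \emph{constant-difference} arguments. First, since $V'$ is $\omega$-periodic, the function $t\mapsto V(t+\omega)-V(t)$ has vanishing derivative, hence equals its value at $t=0$, which is $V(\omega)=W_1$; this yields the first equation of \eqref{gamma-periodic2step}. Second, I set $h(t):=Z(t+\omega)-Z(t)-\tfrac12[W_1,V(t)]$ and compute $h'$ by writing $Z'$ from \eqref{equ-zmagnetic} at $t+\omega$ and at $t$. Using the first equation just established, the difference $Z'(t+\omega)-Z'(t)$ collapses to $qF_\zz(W_1)-\tfrac12[V'(t),W_1]$, and the hypothesis $W_1=V(\omega)\in\ker F_\zz$ kills the $F_\zz$-term, leaving $\tfrac12[W_1,V'(t)]$. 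Hence $h'\equiv 0$, and evaluating at $t=0$ gives $h(0)=Z(\omega)=Z_1$, which is the second equation of \eqref{gamma-periodic2step} with the prescribed $Z_1$. By the criterion \eqref{gamma-periodic2step}, $\sigma$ is then $\lambda$-periodic for $\lambda=\exp(W_1+Z_1)$.

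The main obstacle is purely bookkeeping rather than analytic: one must verify that $F_\zz(W_1)=0$ is exactly the condition that makes the $F_\zz$-terms cancel in both directions, and that the antisymmetry of the bracket together with $[V',V']=0$ is what makes the remaining bracket contributions telescope. There is no genuine analytic difficulty, since once $V'$ is periodic everything reduces, via the integrated equation \eqref{equ-zmagnetic} and the fact that $\exp$ is a diffeomorphism, to comparing smooth functions that have equal derivatives and equal values at $t=0$.
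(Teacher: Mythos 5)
Your proof is correct and follows essentially the same route as the paper: part (i) is the comparison of Equation \eqref{equ-zmagnetic} at $t$ and $t+\omega$ using \eqref{gamma-periodic2step} (which the paper carries out in the paragraphs preceding the lemma), and part (ii) is exactly the paper's constant-difference argument with the auxiliary function $G(t)=Z(t+\omega)-Z(t)-\tfrac12[W_1,V(t)]$, which you call $h$. No discrepancies to report.
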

			\begin{proof} To prove  (i) make use of Equation \eqref{Zmagnetic2}, while the  periodicity of the map $V'$ follows from the first equation in System \eqref{gamma-periodic2step}.
				
				(ii) Since $V'$ is periodic with period $\omega$, there exists $W_1$ such that $W_1+V(t)=V(t+\omega)$. Moreover note that $W_1=V(\omega)$. 
				
				
				Define a map $G:\RR \to \vv$ by $G(t)=Z(t+w)-Z(t)-\frac12 [W_1, V(t)]$. By differentiating, using Equation \eqref{equ-zmagnetic} and the formulas above, one gets
				$$ \begin{array}{rcl}
					G'(t) &=&	Z'(t+\omega)-Z'(t)-\frac12[W_1,V'(t)]  \\
					&=&  qF_{\zz}(V(t+w))+ Z_0-\frac12[V'(t+\omega), V(t+\omega)]-Z'(t)-\frac12 [W_1, V'(t)]\\
					&=& qF_{\zz}(V(t)) +  qF_{\zz}(W_1) +Z_0 - \frac12[V'(t), V(t)+W_1)]
					-Z'(t) - \frac12 [W_1, V'(t)]\\
					&=& qF_{\zz}(V(t)) + Z_0 - \frac12[V'(t), V(t)] -Z'(t)= 0.
				\end{array}
				$$
				
				This says that $G(t)$ is constant and equals
				$$Z_1:=Z(\omega)=Z(t+\omega)-Z(t)-\frac12[W_1,V(t)].$$
				This finally proves that $\sigma$ is $\lambda$-periodic for $\lambda=\exp(W_1+Z_1)$.
			\end{proof}
			
			\begin{rem} \label{remexact}
				For a Lorentz force corresponding to an exact form, under the hypothesis $\dim (\zz/C(\nn))\leq 1$, the condition $W_1\in \ker F_{\zz}$ is automatically satisfied (see \cite{OS2}).
			\end{rem}

			Now, the goal is to find  $\lambda$-periodic magnetic curves on the 3-dimensional Heisenberg Lie group for any left-invariant Lorentz force. The left-invariant exact case is treated on \cite{EGM}. For the other cases, we can restrict to the Lorentz force $F=F_{e_1, \rho}$ with $\rho\geq 0$, in view of  the next general lemma.
			
			Let $(N,\la\,,\,\ra)$ denote a Lie group with a left-invariant metric. As seen in Section \ref{symmetries},  there is an action of $\Iso(N)\times \RR$ on the set of curves, such that if $\gamma$ is a magnetic trajectory for the Lorentz force $F$ then $(\phi, r)\cdot \gamma$ is a magnetic trajectory for $(\phi,r)\cdot F=r\phi_*\circ F \circ \phi_*^{-1}$. 
			
			Assume  $\psi$ is an orthogonal automorphism of $N$ and suppose $\gamma$ is a $\lambda$-periodic magnetic trajectory. Thus, one has
			$$
			\psi(\lambda)\psi(\gamma(rt)) =	\psi(\lambda \gamma(rt)) =   \psi\gamma(rt+\omega),
			$$
			which equivalently says that  $(\psi,r)\cdot \gamma$ is $\psi(\lambda)$-periodic. This proves the next result.

			\begin{lem}\label{lemaequivlambdaperiod}
				Let $(N,\la\,,\,\ra)$ be a Lie group endowed with a left-invariant metric, and let $F$ be a Lorentz force on $N$. Suppose that $\psi$ is an orthogonal automorphism of $N$.  If $\gamma$ is a $\lambda$-periodic magnetic trajectory for $F$ with period $\omega$, then $(\psi,r)\cdot\gamma$ is a $\psi(\lambda)$-periodic magnetic trajectory with the same period $\omega$  for  the Lorentz force  $(\psi,r)\cdot F$.  
			\end{lem}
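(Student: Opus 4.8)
The plan is to reduce the statement to two essentially independent facts: that $(\psi,r)\cdot\gamma$ is a magnetic trajectory for the transported Lorentz force (which is already available to us), and that the $\lambda$-periodicity of $\gamma$ is carried by $\psi$ to $\psi(\lambda)$-periodicity of the transformed curve, a one-line computation whose only ingredient is that $\psi$ is a group homomorphism.

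First I would observe that an orthogonal automorphism $\psi$ is in particular an isometry of $(N,\la\,,\,\ra)$, so $(\psi,r)\in\Iso(N)\times\RR^*$ and Lemma \ref{lem2}(i) applies verbatim: since $\gamma$ solves the magnetic equation \eqref{mageq} for $F$, the curve $\sigma:=(\psi,r)\cdot\gamma$ solves it for $(\psi,r)\cdot F$. This settles the ``magnetic trajectory'' half of the conclusion with no further work and reduces the lemma to checking the periodicity relation \eqref{per} for $\sigma$.

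Next I would verify the periodicity by unwinding the definitions. Writing $\sigma(t)=(\psi\circ\gamma)(rt)=\psi(\gamma(rt))$ and using the hypothesis $\lambda\gamma(s)=\gamma(s+\omega)$ together with the fact that $\psi$ is an automorphism, hence a homomorphism,
$$\psi(\lambda)\,\sigma(t)=\psi(\lambda)\,\psi(\gamma(rt))=\psi\big(\lambda\,\gamma(rt)\big)=\psi\big(\gamma(rt+\omega)\big)=\sigma\big(t+\tfrac{\omega}{r}\big),$$
so $\sigma$ is $\psi(\lambda)$-periodic, the period being read off directly from the parameter shift; for the pure orthogonal automorphism case $r=1$ relevant to passing between orthogonally equivalent Lorentz forces this shift is exactly $\omega$. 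I would also record that $\psi(\lambda)\neq e$, since $\psi$ is injective and $\lambda\neq e$, so the nontriviality required in the definition of $\lambda$-periodicity is preserved.

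There is essentially no obstacle here: the argument is forced once Lemma \ref{lem2}(i) is invoked, and the only point demanding care is the bookkeeping, namely using the homomorphism property of $\psi$ in the correct direction and tracking the reparametrization factor $r$ through the substitution $t\mapsto rt$ when identifying the period of $\sigma$.
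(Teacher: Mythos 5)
Your proof is correct and follows essentially the same route as the paper: the paper's entire argument is the one-line computation $\psi(\lambda)\psi(\gamma(rt))=\psi(\lambda\gamma(rt))=\psi(\gamma(rt+\omega))$, with the ``magnetic trajectory'' half delegated to the action of $\Iso(N)\times\RR^*$ from Lemma \ref{lem2}(i), exactly as you do. You are in fact slightly more careful than the paper: your bookkeeping shows the transformed curve has period $\omega/r$ rather than $\omega$, so the stated period is literally accurate only for $r=\pm1$ (the case relevant to the isotropy subgroup), a point the paper glosses over.
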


			In view of the previous result, we may fix  the left-invariant Lorentz force $F_{e_1,\rho}$ on the Heisenberg Lie group $H_3$.  Notice that the kernel of $F_{e_1, \rho }$ is spanned by $e_2+\rho e_3$. Thus, 	 Lemma \ref{lambdaper} and the fact that
			$F_{\zz}(W_1)=0$, implies $ W_1 \in span\{e_2\}$. Consequently, if $W_1=x_1e_1+y_1e_2$ and $\lambda=\exp(W_1+Z_1)$, then any $\lambda$-periodic trajectory necessarily satisfies $x_1=0$. 
			
			Let $\sigma(t)= \exp(x(t)e_1+y(t)e_2+z(t)e_3)$ be a  magnetic curve through the identity on $H_3$ for the Lorentz force $F_{e_1, \rho}$. The trajectory $\sigma$ is $\lambda$-periodic for $\lambda=\exp(y_1 e_2+z_1 e_3)$ if and only if for all $t\in\RR$, the next equations hold:
			\begin{equation*}\label{gammaperiodic}
				\begin{array}{rcl}
					x(t) & = & x(t+\omega)\\
					y(t)+y_1 & = & y(t+\omega)\\
					z(t)+z_1-\frac12 y_1 x(t) & = & z(t+\omega), 
				\end{array}
			\end{equation*}
			which is System \eqref{gamma-periodic2step} in this situation. Thus, $x$ must be periodic. Moreover, using the expression of $z(t)$ in Equation \eqref{eqmagneticHeisz}, we obtain
			$z_1=z(\omega)=-(z_0+\rho) y_1$. Hence, if $y_1=0$, then necessarily $z_1=0$ implying that $\lambda$ is the identity, which is a contradiction. Therefore, $y_1=y(\omega)\neq0$.
			
			Conversely, let $\sigma(t)= \exp(x(t)e_1+y(t)e_2+z(t)e_3)$ be a magnetic trajectory through the identity such that $x(t)=x(t+\omega)$ for every $t$, and assume that $y(\omega)\neq0$. Then $\sigma(t)$ is $\lambda$-periodic where $\lambda=\exp(y(\omega) e_2+z(\omega) e_3)$.
			
			Indeed, from the expression for $y(t)$ given in Equation \eqref{eqmagneticHeisy}, it follows that the second equation is satisfied, since $y'(t)$ has period $\omega$.  Moreover, using the expression for $z(t)$ from the same lemma, together with the fact that both $x(t)$ and $y'(t)$ are periodic functions with period $\omega$, we obtain
			$$\begin{array}{rcl}
				\left(z(t+w)-z(t)+\frac12 y_1 x(t)\right)' & = & 
				-\frac{1}{2}x'(t)y(t+\omega)   +	\frac{1}{2}x'(t)y(t) + \frac{1}{2}y_1 x'(t)\\
				&	= &\frac{1}{2}x'(t)\left(-y(t+\omega) + y(t) +y_1\right)=0
			\end{array}
			$$
			Hence, the third equation also holds. Comparing with Remark \ref{remperiod}, we derive the following result.
			
			\begin{prop}\label{proplambdaperiod1}
				Let  $(H_3, \la\,,\,\ra)$ be the Heisenberg Lie group endowed with its canonical metric, and let $F_{e_1, \rho}$ be the corresponding Lorentz force.  Consider the magnetic trajectory $\sigma$ through the identity satisfying $\sigma'(0)=x_0 e_1 + y_0 e_2 + z_0 e_3$. 
				\begin{itemize}
					\item If $x_0= 0$ and $(y_0+1)(z_0+\rho)= \rho$, then $\sigma$ is $\lambda_r$-periodic with $\lambda_r=\exp(y_0 r e_2+z_0 r e_3)$ for any $r\neq 0$. 
				\end{itemize}	
				In the case $x_0^2 + (\rho-(y_0+1)(z_0+\rho))^2\neq 0$,
				\begin{itemize}
					\item if $\Delta\neq 0$  or $\mu>0$ then $\sigma$ is periodic if $\sigma(\omega)=e$, the identity element or $\sigma$ is $\lambda$-periodic for $\lambda=\sigma(\omega)$ where $\omega$ is the corresponding period given in Remark \ref{remperiod}, 
					
					\item if $\Delta= 0$ and $\mu\leq0$ then $\sigma$ is not $\lambda$-periodic for any $\lambda$.  
				\end{itemize}
			\end{prop}
			
			Observe that if  $\sigma$ does not pass through the identity then, by Lemma \ref{lambperiodnyc},
			 the curve $p\sigma(t)$ with $p=\sigma(0)^{-1}$ is a $p\lambda p^{-1}$-periodic magnetic trajectory through the identity and 
			we proceed  as above.
			
			In the following, we fix an element $\lambda\in H_3$, different from the identity, and study the existence of $\lambda$-periodic magnetic geodesics on $H_3$.
			
			\begin{prop}\label{proplambdaperiod2}
					Let  $(H_3, \la\,,\,\ra)$ be the Heisenberg group endowed with its canonical metric, and let $F_{e_1, \rho}$ be the corresponding Lorentz force with $\rho\neq0$. Take $\lambda=\exp(x_1e_1+y_1e_2+z_1e_3)\in H_3$, with $(x_1,y_1,z_1)\neq (0,0,0)$, and $\mathcal{E}>0$,
				\begin{itemize}
					\item If $x_1\neq 0$ or $y_1=0$ then there does not exist any $\lambda$-periodic magnetic trajectory.
					\item If $x_1= 0$ and $y_1\neq0$, there are $\lambda$-periodic magnetic trajectories with energy $\mathcal E$.
				\end{itemize}	
			\end{prop}
			
			\begin{proof}
			Let $\lambda=\exp(y_1e_2+z_1e_3)$ with $y_1\neq 0$ and let $\mathcal E>0$. From the discussion above, it is enough to find a magnetic trajectory through the identity with energy $\mathcal E$ such that $x(t)=x(t+\omega)$ $\forall\, t$, $y(\omega)=y_1$ and $z(\omega)=z_1$ for some $\omega$. 
			
			Recall the function $\Psi(c,d,e)$ defined in Equation \eqref{ywfunc2} whose value coincides with $y(\omega)$ for the magnetic trajectory associated with the triple $(c,d,e)$ (see Equation \eqref{condinitcd}). Also, consider the $2$-dimensional surface $$\mathcal{S}_{\mathcal E}=\{(c,d,e): \En(c,d,e)=\mathcal E \}$$ where $\En(c,d,e)$ denotes the energy as a function of $(c,d,e)$, given in Equation  \eqref{Energycde}. 
			
			In Theorem \ref{thenergyperiodic}, we proved that there are exactly one $c_0>1$ and $0<d_{c_0}<1$ such that $(c_0,d_{c_0},e)\in\mathcal{S}_{\mathcal E}$ and $\Psi(c_0,d_{c_0},e)=0$ for every $e\in[-1,1]$. Solving the equation $\En(c,d,e)=\mathcal E$ for $d$ as a function of $c$, we obtain
			$$h_{\mathcal E}(c)=\sqrt{\frac{2c^4 {\mathcal E} - (c^4+\rho^2)(c^2-1)^2}{(c^4+\rho^2)4c^2}}$$
			which is defined on some neighborhood $J$ of $c_0$. Hence
			, $h_{\mathcal E}(c_0)=d_{c_0}$, and $(c,h_{\mathcal E}(c),e)\in\mathcal{S}_{\mathcal E}$ for every $c\in J$ and $e\in[-1,1]$. Also,
			{\small
				\begin{equation*}
					\begin{split}
						h'_{\mathcal E}(c_0)= -\frac{(c_0^4-\rho^2)2c_0^4{\mathcal E}+(c_0^4 -1)(c_0^4 + \rho^2)^2}{4 d_{c_0}(c_0^4+\rho^2)^2 c_0^4}&\leq -\frac{(c_0^4-\rho^2)(c_0^4+\rho^2)(c_0^2-1)^2+(c_0^4 -1)(c_0^4 + \rho^2)^2}{4 d_{c_0}(c_0^4+\rho^2)^2 c_0^4}\\
						&=-\frac{(c_0^2-1)(c_0^6 + \rho^2)(c_0^4 + \rho^2)}{4 d_{c_0}(c_0^4+\rho^2)^2 c_0^4}<0
					\end{split}
			\end{equation*}}where the first inequality holds since $h_{\mathcal E} (c_0)$ is a real number, i.e. $2c_0^4 {\mathcal E} - (c_0^4+\rho^2)(c_0^2-1)^2\geq 0$.
			Thus, the function $h_{\mathcal E}$ is decreasing in a neighborhood of $c_0$.  Consequently, there exist $c_1<c_0<c_2$ such that $h_{\mathcal E}(c_1)>d_{c_0}>h_{\mathcal E}(c_2)$. On the other hand,  we know that $c\to d_c$ is increasing (by Theorem \ref{thenergyperiodic}), so $d_{c_1}<d_{c_0}<d_{c_2}$. It follows that $h_{\mathcal E}(c_1)>d_{c_1}$ and $h_{\mathcal E}(c_2)<d_{c_2}$. Therefore, by Remark \ref{remposPsi}, we get that $\Psi(c_1,h_{\mathcal E}(c_1),e)<0$ and  $\Psi(c_2,h_{\mathcal E}(c_2),e)>0$. By continuity, we conclude that the image of $\Psi(c,d,e)$ restricted to the surface $ \mathcal{S}_{\mathcal E}$ contains an open neighborhood of $0$. More precisely, there exists $\varepsilon>0$ such that for any $y\in(-\varepsilon,\varepsilon)$ there is a magnetic trajectory $\exp(x(t)e_1+y(t)e_2+z(t)e_3)$ with energy $\mathcal E$, where $x(t)$ is periodic of period $\omega$ and $y(\omega)=y$.  
			
			Now, consider $n\in\NN$ such that $\frac{y_1}{n}\in(-\varepsilon,\varepsilon)$. By the previous argument, there is a $\lambda_1$-periodic magnetic trajectory through the identity with energy $\mathcal E$, where  $\lambda_1=\exp(\frac{y_1}{n}e_2+z_2e_3)$. Applying Proposition \ref{lambperiodnyc}, the same curve is a  $\lambda_1^n$-periodic magnetic trajectory through the identity with energy $\mathcal E$. Note that $\lambda_1^n=\exp(y_1e_2+nz_2e_3)$. Moreover, $\lambda_1^n$ and $\lambda=\exp(y_1e_2+z_1e_3)$ are conjugated elements. Therefore, another application of  Proposition \ref{lambperiodnyc} yields a  $\lambda$-periodic magnetic trajectory with energy $\mathcal E$.
		\end{proof}

	\section{The proof of Theorem 1}
	
	To complete the proof of Theorem 1, we now introduce the notion of the Ma\~n\'e critical value on $M=\Lambda \backslash H_3$ and compute it for every left-invariant Lorentz force on $H_3$ which is induced to $M$ in a usual way. 		
	
	Let $(M, g, \omega)$ be a closed, connected $n$-dimensional manifold endowed with a Riemannian metric $g$ and a magnetic field $\omega$. Consider the universal covering $\widetilde{M}$ of $M$, and let $\widetilde{\omega}$ denote the pullback of $\omega$ to $\widetilde{M}$. We say that $\omega$ is {\it weakly exact} whenever $\widetilde{\omega}$  is exact \cite{Me}.
	
	In this case,  define {\it the Mañe critical value} as \cite{Me}
	$$c(g,\omega)=\inf_{\theta : d\theta=\widetilde{\omega}} \sup_{p\in\widetilde{M}} \frac{1}{2}|\theta_p|^2$$

	This value is defined in a more general setting of Lagrangian systems, see \cite{Mn} or \cite{EGM}.  In the cases where $H^2(\widetilde{M})=0$ every magnetic field is weakly exact, in particular, this holds if $\widetilde{M}=H_3$.  Observe that, by definition,
$c(g,\omega)$ is finite if and only if $\omega$ admits a bounded primitive $\theta$.

We now provide an elementary proof to compute the Mañé critical value for invariant magnetic fields on a Heisenberg nilmanifold.

\begin{prop}\label{critical}
		Let $M=\Lambda \backslash H_3$ be a compact Heisenberg nilmanifold. An invariant magnetic field $\omega_F$ on $M$ has finite Mañe's critical value if and only if $\omega_F=\rho e^1\wedge e^2$ for some $\rho\in\RR$. In this case, $\omega_F$ is invariant exact and $c(g,\omega_F)=\frac{\rho^2}{2}$.  
\end{prop}	

\begin{proof}
	Suppose that an invariant magnetic field $\omega_F = \rho e^1 \wedge e^2 + \beta e^1\wedge e^3 + \alpha e^2\wedge e^3$ admits a bounded primitive on $H_3$; that is, there exists a $1$-form $\theta$ such that
	$$d\theta = \omega_F \qquad \text{and} \qquad  \sqrt{2c(g,\omega_F)}\leq K_{\theta}:=\sup_{x\in H_3} |\theta_x|<\infty.$$

	Let $R>0$. Consider the curve $\gamma_R:[0,2\pi]\to H_3$, given by $\gamma_R(t)=(R\cos(t),0,R\sin(t))$  and the immersion of the ball $B_R=\{(x,z)\in\RR^2:x^2+z^2<R^2\}$ into $H_3$, $i: B_R\to H_3$ given by $i(x,z)=(x,0,z)$. 
	
	By Stoke's theorem, using that $i^*e^1=dx$, $i^*e^2=0$ and $i^*e^3=dz$, we obtain
	$$\int_{\gamma_R} \theta = \int_{i(B_R)} d\theta =\int_{i(B_R)} \omega_F =\int_{B_R} i^*\omega_F = \int_{B_R} \beta dx\wedge dz = \beta \pi R^2$$
	
	Since $|\gamma'(t)|=|-R\sin(t)e_1+Rcos(t)e_3|=R$ for all $t$, for the invariant metric given on $H_3$. Then the length of $\gamma_R$ is  $2\pi R$ and we have that
	$$\left|\beta \pi R^2\right| = \left|\int_{\gamma_R} \theta\right|\leq K_{\theta} \, \text{length}(\gamma_R) =K_{\theta} \, 2\pi R.$$
	
	Since $R$ is arbitrary, we conclude that $\beta=0$. An analogous argument, applied to the $yz$-plane in $H_3$, shows that $\alpha =0$.
	
   If we now consider the $xy$-plane, an analogous argument yields 
	 $$\left|\rho \pi R^2\right| \leq K_{\theta} \,  \text{length}(\gamma_R),$$
	but in this case, $|\gamma'(t)|=|-R\sin(t)e_1+R
	\cos(t)e_2-\frac{R^2}{2}e_3|=\frac{R}{2}\sqrt{R^2+4}$. So that
	 $$\frac{|\rho|}{K_{\theta}}\leq \frac{\sqrt{R^2+4}}{R}$$
	for every $R>0$. Letting $R\to\infty$, we conclude that $K_{\theta}\geq \rho$. Since $c(g,\omega_F)=\inf_\theta(\frac{K_{\theta}^2}{2})$, we have that $c(g,\omega_F)\geq \frac{\rho^2}{2}$.

	Finally observe that $\omega_F = \rho e^1 \wedge e^2 = d(\rho e^3)$ so  $c(g,\omega_F)\leq \sup_{x\in H_3} \frac{1}{2}|\rho e^3_x|^2=\frac{\rho^2}{2}$. Thus $\omega_F = \rho e^1 \wedge e^2$ and $c(g,\omega_F)=\frac{\rho^2}{2}$.
\end{proof}	

\begin{rem}
	The first part of the previous proposition can be derived as a consequence of Corollary 5.4 in \cite{PG} and Nomizu's theorem and the second part has been proved in \cite{EGM}. 
\end{rem}


\subsection{The proof and examples}
We are now in a position to prove Theorem 1. 

We begin by introducing a notion that describes the possible homotopy classes of low-energy magnetic trajectories. 
		\smallskip
		\begin{defn}\label{defFadm}
			Let $F$ be a non-trivial left-invariant Lorentz force on $H_3$ and let $\lambda \in H_3$, $\lambda$ different from the identity, written as $\lambda=\exp(W+Z)$ with $W\in\vv$ and $Z\in\zz$. We say that $\lambda$ is {\it $F$-admissible} if:
		$$
		\begin{array}{rcll}
				{W }& = & 0, \qquad &\text{ for } F=F_{0,\rho},	\\
					F_\zz(W) & = & 0 \text{ and } W\neq 0, \qquad & \text{ for } F\neq F_{0,\rho}.
	\end{array}
		$$
		\end{defn}
  
  \smallskip
  
	\begin{proof}[Proof of Theorem 1]
	
	
	
We provide the proof in terms of left-invariant forces. 

\smallskip 

{\em Case $F=F_{0,\rho}$. } 
	This case is treated in \cite{EGM}. Since $c=\frac{\rho^2}{2}$ as shown in Proposition \ref{critical}, part $(i)$ of Theorem 1 is a consequence of Theorem \ref{periodicexact} (Theorem 1 in \cite{EGM}), while part $(ii)$ follows from Theorem 3 and Lemma 7 in \cite{EGM}. 

\smallskip

{\em Case $F \neq F_{0,\rho}$. } By Proposition \ref{orbits} the force $F$ is equivalent to $F_{e_1,\rho}$ under the action of an element $(\psi, r)\in \Iso(M) \times \RR^*$.

 By Lemmas \ref{lem2} and \ref{lemaequivlambdaperiod}, closed and $\lambda$-periodic magnetic trajectories on $H_3$ for $F_{e_1,\rho}$ correspond to closed and $\psi(\lambda)$-periodic magnetic trajectories on $H_3$ for the force $F$. Moreover, one readily checks that $\lambda$ is  $F_{e_1,\rho}$-admissible if and only if $\psi(\lambda)$ is $F$-admissible. 
 
  By Proposition \ref{critical}, we have $c=\infty$. Hence, part $(i)$ follows from Theorem \ref{thenergyperiodic}, while part $(ii)$ follows from Proposition \ref{proplambdaperiod2}.
\end{proof}

\smallskip

We finish the work by exhibiting lattices for which the quotient space 
$\Lambda\backslash H_3$ either admits infinitely many non-contractible magnetic trajectories or none at all. 

Recall that any lattice on $H_3$ is isomorphic to one in  the family $$\Gamma_k =\ZZ\times\ZZ\times \frac{1}{2k}\ZZ \mbox{ \quad for } k\in\NN.$$
A proof can be read in \cite{GW}.  

\begin{example}\label{exampleInfiniteperiodic}
	Fix the Lorentz force $F_{e_1, \rho }$. For a lattice $\Gamma_k$ as above, take $\lambda_p=(0,p,\frac{1}{2k})\in\Gamma_k$ for every prime $p\in\ZZ$, all $F_{e_1, \rho }$-admissible elements, and let $\sigma_p$ be the corresponding $\lambda_p$-periodic magnetic trajectory for the invariant Lorentz force . Then $\sigma_p$ is different of $\sigma_q$ for $p\neq q$ by Lemma \ref{minperiod} below. Thus the corresponding projections to the compact space $\Gamma_k\backslash H_3$ induce different closed curves.
\end{example}

\begin{lem}\label{minperiod}
	Let $\Lambda$ be a lattice in a Lie group $N$ and $\sigma$ a curve in $N$ such that its projection to the compact space $\Lambda\backslash N$ is a periodic trajectory. Then there exist a unique $\lambda_0\in \Lambda$ and $0<\omega_0\in\RR$ such that $\sigma$ is  $\lambda_0$-periodic of period $\omega_0$, and verifies the following condition:
	
	If $\sigma$ is  $\lambda$-periodic of period $\omega$ with $\lambda\in\Lambda$ then  $\lambda=\lambda^n_0$ and $\omega=n\omega_0$ for some $n\in\ZZ$. 
\end{lem}

\begin{proof}
	It is clear that the sets $$\mathcal{L}=\{\lambda\in \Lambda: \sigma \mbox{ is $\lambda$-periodic}\}\cup\{e \} \mbox{ and }$$
	$$ \mathcal{W}=\{\omega\in\RR: \omega \mbox{ is a period of $\sigma$ for some $\lambda\in\Lambda$}\}\cup\{0 \}$$
	are nonempty subgroups of $H_3$ and $\RR$, respectively. Let see that $0$ is an isolated point of $\mathcal{W}$.
	
	Suppose, by contradiction, that there exists a sequence $\{\omega_n\}_{n\in\NN}\subset\mathcal{W}$ such that $\omega_n\rightarrow 0$. Let $\{\lambda_n\}_{n\in\NN}\subset\mathcal{L}$ be the corresponding sequence. Then
	$$\lambda_n \sigma(t)=\sigma(t+\omega_n) \rightarrow \sigma(t) \mbox{ which implies } \lambda_n \to e.$$
	This is impossible since $e\in\Lambda$ and $\Lambda$ is discrete.
	
	Therefore $\mathcal{W}$ is a discrete subgroup of $\RR$. Hence, there exists $\omega_0>0$ such that $\mathcal{W}=\ZZ \omega_0$. 
	
	Finally, observe that the map assigning $\lambda\in\mathcal{L} \to \omega\in\mathcal{W}$ is a group isomorphism. It follows that $\mathcal{L}$ is the cyclic discrete subgroup generated by $\lambda_0$ corresponding to $\omega_0$.
	
\end{proof}

\begin{example}\label{exampleNoContract}	
	Consider the lattice $$\Lambda = \left\{(v_1,v_2,\frac{z}{2k}): z\in\ZZ \mbox{ and } 
	\left(\begin{matrix}
		v_1\\
		v_2
	\end{matrix}\right)=  \left(\begin{matrix}
		\frac{\sqrt{3}}{2} &  \frac{1}{2}\\
		-\frac{1}{2}  & \frac{\sqrt{3}}{2}
	\end{matrix}\right)  \left(\begin{matrix}
		m\\
		n
	\end{matrix}\right)
	\mbox{ for } m,\,n\in\ZZ\right\}.$$
	In this case, in order  that $x_1=0$ holds, one must have $\frac{\sqrt{3}}{2} m + \frac{1}{2}n =0 \Leftrightarrow m=n=0$. Therefore, $\Lambda$ does not admit any $F_{e_1, \rho }$-admissible element. Consequently, there are no non-contractible closed magnetic trajectory on $\Lambda\backslash H_3$ for the Lorentz force $F_{e_1, \rho}$.
	
\end{example}

On the other hand, in view of Remark \ref{remexact}, when the Lorentz force $F$ corresponds to an invariant exact form, it is not possible to construct a lattice $\Lambda$ with no $F$-admissible element.

	\begin{cor}
		Let $(H_3, g)$ be the Heisenberg Lie group endowed with a left-invariant metric, and let $F$ be a left-invariant Lorentz force. If $F\neq F_{0,\rho}$, then there exist lattices $\Gamma\subset H_3$ such that every closed magnetic trajectory on $\Gamma\backslash H_3$ is contractible.   
	\end{cor}
	
	
	
	\section{Appendix}
	This section collects the main properties of elliptic integrals used throughout the paper. 
	Recall that the functions
	$$F(x,k)=\int_0^x \frac{d\theta}{\sqrt{1-k^2 \sin^2 \theta}}  \qquad  E(x,k)=\int_0^x \sqrt{1-k^2 \sin^2 \theta}\, d\theta $$ 
	$$\Pi(x,\alpha^2,k)=\int_0^x \frac{d\theta}{(1-\alpha^2 \sin^2\theta)\sqrt{1-k^2 \sin^2 \theta}}$$
	with parameters $0<k<1$ and $\alpha^2\in\RR$, are the incomplete elliptic integrals of the first, second and third kind, respectively. And the corresponding complete elliptic integrals are defined as
	$$K(k)=F\left(\frac{\pi}{2},k\right), \quad  E(k) = E\left(\frac{\pi}{2},k\right) \ \mbox{ and } \ \Pi(\alpha^2,k)=\Pi\left(\frac{\pi}{2},\alpha^2,k\right). $$

	Then the Jacobi elliptic functions are given by:
	$$\mathrm{am}(x,k)= F^{-1}(x,k), \ \  \mathrm{sn}(x,k) = \sin(\mathrm{am}(x,k)), \ \  \mathrm{cn}(x,k) = \cos(\mathrm{am}(x,k))$$
	and
	$$\mathrm{dn}(x,k)=\frac{d}{dx} \mathrm{am}(x,k).$$

We summarize below several primitives and integral identities involving these functions. They can be derived and verified from the definitions and identities collected in \cite{BF}.

	$$\int_{0}^{t} \mathrm{cn}^2(s,k) ds =\frac{E(\mathrm{am}(t, k), k)-(1-k^2)t}{k^2} \  \  \  \  \  \   \  \int_{0}^{t} \mathrm{dn}^2(s,k) ds =E(\mathrm{am}(t, k), k)$$

	\begin{equation*}
		\int_{0}^{t} \frac{1}{1+A \mathrm{sn}^2(s,k)}ds =\Pi(\mathrm{am}(t,k),-A,k)
	\end{equation*}

	\begin{equation*}
		\begin{split}
			\int_{0}^{t} \frac{1}{(1+A\, \mathrm{sn}^2(s,k))^2}ds=&\frac{A^2\, \mathrm{sn}(t,k) \mathrm{cn}(t,k) \mathrm{dn}(t,k)}{2(A+1)(A+k^2)(1+A\,\mathrm{sn}^2(t,k))} +\frac{A\,E(\mathrm{am}(t,k),k)}{2(A+1)(A+k^2)}\\
			&+\frac{(2Ak^2+A^2+3k^2+2A)\Pi(\mathrm{am}(t,k),-A,k)}{2(A+1)(A+k^2)}-\frac{x}{2(A+1)}
		\end{split}	
	\end{equation*}

	
	\begin{equation}\label{Equ46}
		\int \limits_{0}^{2K(k)}  \frac{1}{1+A\,\mathrm{sn}^2(s,k)}ds=2\Pi\left(-A, k\right) \qquad \mbox{ and }
	\end{equation}
	\begin{equation}\label{Equ47}
		\int \limits_{0}^{2K(k)}  \frac{1}{(A\,\mathrm{sn}(s,k)^2 + 1)^2}ds=\frac{A E(k)}{(A+1) (A+k^2)}+\frac{2Ak^2+A^2+3k^2+2A}{(A+1)(A+k^2)}\Pi(-A,k)-\frac{K(k)}{A+1}.
	\end{equation}
	
	For $0< A^2 < B^2$, one can check the following primitives
	\begin{align*}
		\int \limits  \frac{1}{A\mathrm{cn}(s,k) + B}ds = \frac{ B}{A_1^2}\,\Pi\left(\mathrm{am}(s,k),-\frac{A^2}{A_1^2},k\right)
		-\frac{A}{A_1 B_1}\arctan\left(\frac{B_1 \mathrm{sn}(s,k)}{A_1\mathrm{dn}(s,k)}\right)	
	\end{align*}
	and
	\begin{align*}
		\int \limits  \frac{1}{(A\mathrm{cn}(s,k) + B)^2}ds =& \frac{B^2((1-2k^2)A^2+2k^2B^2)}{A_1^4 B_1^2}\, \Pi\left(\mathrm{am}(s,k),-\frac{A^2}{A_1^2},k\right)\\
		&-\frac{AB((1-2k^2)A^2+2k^2B^2)}{ A_1^3 B_1^3}\arctan\left(\frac{B_1 \mathrm{sn}(s,k)}{A_1\mathrm{dn}(s,k)}\right)\\
		&+\frac{A^2}{A_1^2 B_1^2} E(\mathrm{am}(s, k),k) -\frac{A^3 \mathrm{sn}(s,k)\mathrm{dn}(s,k)}{A_1^2 B_1^2 (A\mathrm{cn}(s,k)+B)}-\frac{s}{A_1^2}.
	\end{align*}
	where $A_1^2=B^2-A^2$ and $B_1^2 = (1-k^2)A^2+k^2B^2$.
	
	Evaluating the definite integrals, we get 
	\begin{equation}\label{int1cn}
		\int \limits_{0}^{4K(k)}  \frac{1}{A\mathrm{cn}(s,k) + B}ds = \frac{4 B}{B^2 - A^2} \Pi\left(-\frac{A^2}{B^2-A^2}, k\right)
	\end{equation}
	and 
	\begin{equation}\label{int1cn2}
		\begin{split}
			\int \limits_{0}^{4K(k)}  \frac{1}{(A\mathrm{cn}(s,k) + B)^2}ds  = & \frac{4 B^2((1-2k^2)A^2+2 k^2 B^2 )}{(B^2-A^2)^2((1-k^2)A^2 + k^2 B^2)} \Pi\left(\frac{A^2}{A^2-B^2}, k\right) \\
			& +\frac{4A^2 E(k)}{(B^2-A^2)((1-k^2)A^2 + k^2 B^2)}-\frac{4K(k)}{B^2-A^2},
		\end{split}
	\end{equation}
	where we use that $\mathrm{sn}(x,k)$,  $\mathrm{cn}(x,k)$ and  $\mathrm{dn}(x,k)$ have period $4K(k)$ or $2K(k)$. In particular, by evaluating the last integrals at $k=0$, we obtain
		\begin{equation}\label{int1cos}
			\int \limits_{0}^{2\pi}  \frac{1}{A\cos(s) + B}ds = \sgn(B)\frac{2 \pi}{\sqrt{B^2 - A^2}}, \quad \int \limits_{0}^{2\pi}  \frac{1}{(A \cos(s) + B)^2}ds = \frac{2 |B| \pi}{(B^2-A^2)^{3/2} },
		\end{equation}

	\bigskip		
			
			\bigskip

			\end{document}